\newcommand{\algten}{\mathop{\underline{\otimes}}}
\newcommand{\bop}[2]%
{\ifthenelse{\equal{#2}{}}{\bopp( #1 )}{\bopp( #1; #2 )}}
\newcommand{\bopp}{B}
\newcommand{\bra}[1]{\langle #1 |}
\newcommand{\comp}{\mathop{\circ}}
\newcommand{\dom}{\mathop{\mathrm{Dom}}}
\newcommand{\elltwo}{L^2( \R_+; \mul )}
\newcommand{\eevec}{\varepsilon}
\newcommand{\evec}[1]{\mathord{\eevec(#1)}}
\newcommand{\expn}{\mathbb{E}}
\newcommand{\filt}{\mathcal{B}}
\newcommand{\fock}{\mathcal{F}}
\newcommand{\hilb}{\mathsf{H}}
\newcommand{\hlf}{\mbox{$\frac12$}}
\newcommand{\id}{I}
\newcommand{\idop}{\mathop{\mathrm{id}}\nolimits}
\newcommand{\indf}[1]{1_{#1}}
\newcommand{\ini}{\mathfrak{h}}%{\mathsf{h}}
\newcommand{\intd}{\,\rd}
\newcommand{\ket}[1]{| #1 \rangle}
\newcommand{\mmul}{{\widehat{\mul}}}
\newcommand{\mul}{\mathsf{k}}
\newcommand{\semigroup}{\mathcal{P}}
\newcommand{\semigrouq}{\mathcal{Q}}
\newcommand{\sstex}{\mathfrak{z}}
\newcommand{\stex}[1]{\mathord{\sstex(#1)}}
\newcommand{\uwkten}{\mathop{\overline{\otimes}}}
\newcommand{\Vac}{\Omega}
\newcommand{\vac}{\omega}
\newcommand{\vNa}{\mathsf{A}}
\newcommand{\vNnoise}{\mathsf{N}}
\newcommand{\Wiener}{{\mathbb{W}}}
\newcommand{\rd}{\mathrm{d}}
\newcommand{\Cf}{\textit{Cf}.}
\newcommand{\cf}{\textit{cf}.}
\newcommand{\ie}{\textit{i.e., }}
\renewcommand{\ge}{\geqslant}
\renewcommand{\le}{\leqslant}
\newcommand{\via}{\textit{via }}
\newcommand{\C}{\mathbb{C}}
\newcommand{\I}{\mathrm{i}}
\newcommand{\R}{\mathbb{R}}
\newenvironment{mylist}%
{\begin{list}{}%
{\leftmargin 7em\labelwidth 5em\rightmargin 1.25em%
\topsep 0.75ex\itemsep 0.5ex}}%
{\end{list}}
     \def\section{\@startsection{section}{1}%
     \z@{.7\linespacing\@plus\linespacing}{.5\linespacing}%
     {\bfseries%\normalfont\scshape
     \centering
     }}
     \def\@secnumfont{\bfseries}
\newtheorem{theorem}{Theorem}[section]
\newtheorem{lemma}[theorem]{Lemma}
\newtheorem{proposition}[theorem]{Proposition}
\newtheorem{corollary}[theorem]{Corollary}
\theoremstyle{definition}
\newtheorem{definition}[theorem]{Definition}
\theoremstyle{remark}
\newtheorem{remark}[theorem]{Remark}
\numberwithin{equation}{section}
\begin{document}

\title[Quantum Feynman--Kac formulae]%
{A vacuum-adapted approach to\\quantum Feynman--Kac formulae}

\author{Alexander C. R. Belton}
\address{ACRB: Department of Mathematics and
Statistics, Lancaster University, Lancaster LA1 4YF, United Kingdom}
\email{a.belton@lancaster.ac.uk}

\author[J. Martin Lindsay]{J. Martin Lindsay}
\address{JML: Department of Mathematics and Statistics,
Lancaster University, Lancaster LA1 4YF, United Kingdom}
\email{j.m.lindsay@lancaster.ac.uk}

\author[Adam G. Skalski]{Adam G. Skalski}
\address{AGS: Mathematical Institute of the Polish Academy
of Sciences, ul.~\'{S}niadeckich~8, P.O. Box 21, 00-956 Warszawa,
Poland}
\email{a.skalski@impan.pl}

\subjclass[2000]{Primary 47D08; Secondary 46L53, 47N50, 81S25}

\keywords{Quantum stochastic cocycle, Markovian cocycle, quantum
stochastic flow, Feynman--Kac perturbation, quantum stochastic
analysis}

\begin{abstract}
The vacuum-adapted formulation of quantum stochastic calculus is
employed to perturb expectation semigroups \via a Feynman--Kac
formula. This gives an alternative perspective on the perturbation
theory for quantum stochastic flows that has recently been developed
by the authors.
 \end{abstract}

\maketitle

\section{Introduction}

Let $\alpha = ( \alpha_t )_{t \in \R}$ be an ultraweakly continuous
group of normal $*$-automorphisms of a von~Neumann algebra $\vNa$
acting faithfully on the Hilbert space $\ini$, and let $\delta$ be its
ultraweak generator. Gaussian subordination may be used to construct
an ultraweakly continuous semigroup $\semigroup^0$ on $\vNa$ with
ultraweak pre-generator $\hlf \delta^2$ \cite[Section~1]{LiS97} in the
following manner. If $B = ( B_t )_{t \ge 0}$ is standard Brownian
motion on Wiener's probability space $\Wiener$ then, by It\^o's
formula, the unital $*$-homomorphism
\[
j_t : \vNa \to \vNa \uwkten L^\infty( \Wiener ) = %
L^\infty( \Wiener ; \vNa); \ %
a \mapsto \alpha_{B_t( \, \cdot \, )}( a ) \qquad ( t \ge 0 )
\]
satisfies the stochastic differential equation
\begin{equation}\label{eqn:ls}
j_t( x ) = x + \int_0^t j_s( \delta( x ) ) \intd B_s + %
\frac12 \int_0^t j_s( \delta^2( x ) ) \intd s %
\qquad ( x \in \dom \delta^2 )
\end{equation}
in the strong sense on $L^2( \Wiener; \ini )$. Thus
\[
\semigroup^0_t( a ) u := \expn_\Wiener[ j_t( a ) u ] \qquad %
( a \in \vNa, \, u \in \ini \subset L^2( \Wiener; \ini ) )
\]
defines an ultraweakly continuous semigroup
$( \semigroup^0_t )_{t \ge 0}$ of normal unital completely positive
contractions on~$\vNa$ whose ultraweak generator is as desired.

For the case where $\alpha$ is unitarily implemented, Lindsay and
Sinha obtained an ultraweakly continuous semigroup $\semigroup^b$
with Feynman--Kac representation
\begin{equation}
\label{FK LS}
\semigroup^b_t( a ) u = \expn_\Wiener[ j_t( a ) m^b_t u ] %
\qquad ( t \ge 0, \, a \in \vNa, \, u \in \ini )
\end{equation}
whose ultraweak generator extends $\hlf \delta^2 + \rho_b \delta$,
where $\rho_b : a \mapsto a b$ is the operator on~$\vNa$ of right
multiplication by $b$ \cite[Theorem~3.2]{LiS97}. Here $m^b$ is the
exponential martingale such that
\[
 m^b_t = \id + \int_0^t j_s( b ) m^b_s \intd B_s \qquad ( t \ge 0 ),
\]
where $b \in \vNa$ is self adjoint. For the Laplacian on~$\R^{3 d}$
and the commutative von~Neumann algebra $L^\infty( \R^{3 d} )$, such
vector-field perturbations were studied from this viewpoint by
Parthasarathy and Sinha (\cite{PaS83}). Other works on quantum
Feynman--Kac formulae include \cite{Acc78}, \cite{HIP8284},
\cite{AcF83} and \cite{Arv84}, all of which belong to the
pre-quantum stochastic era. The classical Feynman-Kac formula for
Schr\"odinger operators, which is closely related to instances of
the Trotter product formula, is well described in the books
\cite{RSv2} and \cite{Simon}.

The results of Lindsay and Sinha have been fully generalised in
\cite{BLS11}. In that paper a general perturbation theory for quantum
stochastic flows is developed, yielding a much wider class of quantum
Feynman--Kac formulae. Here we take our inspiration from
\cite{BaP03}. The semigroups defined in \eqref{FK LS} will not, in
general, be positive or even real (\ie $*$-preserving). In this light
Bahn and Park investigate a more symmetric form of Feynman--Kac
perturbation, using instead an operator process~$n^b$ such that
\begin{equation}\label{eqn:bp}
n^b_t f = f + %
\int_0^t j_s( b ) \expn_\Wiener[ n^b_s f | \filt_s ] \intd B_s - %
\frac12 \int_0^t j_s( b^2 ) \expn_\Wiener[ n^b_s f | \filt_s ] \intd s
\end{equation}
for all $f \in L^2( \Wiener; \ini )$, where $( \filt_t )_{t \ge 0}$ is
the canonical filtration of the Brownian motion $B$. In this case,
letting
\[
\semigrouq^b_t( a ) u := %
\expn_\Wiener[ ( n^b_t )^* j_t( a ) n^b_t u ] %
\qquad ( a \in \vNa, \, u \in \ini )
\]
gives an ultraweakly continuous completely positive semigroup
$( \semigrouq^b_t )_{t \ge 0}$ on~$\vNa$, which is contractive
if~$n^b$ is and whose generator extends the map
\begin{equation}\label{eqn:BP}
\hlf \delta^2 + \lambda_b \delta + \rho_b \delta + %
\lambda_b \rho_b - \hlf \lambda_{b^2} - \hlf \rho_{b^2},
\end{equation}
where $\lambda_b$ denotes the operator on $\vNa$ given by left
multiplication by $b$.

In this work we are guided by the form of~\eqref{eqn:bp}; the
conditional expectations make it reminiscent of a stochastic
differential equation used by Alicki and Fannes for dilating quantum
dynamical semigroups \cite[Equation (12)]{AlF87}. As observed in
\cite{Blt01}, this type of equation may be profitably interpreted in
the vacuum-adapted form of quantum stochastic calculus. In contrast
to \cite{BLS11}, where the standard identity-adapted
(Hudson--Parthasarathy) theory is used, here the analysis is slightly
easier although the algebra becomes a bit more complicated.

We describe the contents of the paper next, restricting our
description here to the one-dimensional case, for simplicity. The
requirement that $\alpha$ is unitarily implemented is removed; our
primary object is a vacuum-adapted quantum stochastic flow. This is
an ultraweakly continuous family $j = ( j_t )_{t \ge 0}$ of normal
$*$-homomorphisms which form a vacuum-adapted quantum stochastic
cocycle on Boson Fock space over $L^2( \R_+ )$ and which are as unital
as vacuum adaptedness permits. The flow $j$ is assumed to satisfy the
quantum stochastic differential equation
\begin{equation}\label{eqn:iqsde}
\rd j_t( x ) = %
j_t( \delta_0( x ) ) \intd A^\dagger_t + %
j_t( \pi_0( x ) ) \intd \Lambda_t + %
j_t( \delta^\dagger_0( x ) ) \intd A_t + %
j_t( \tau_0( x ) ) \intd t
\end{equation}
for all $x \in \vNa_0$, where $\vNa_0$ is a subset of $\vNa$, and
the \emph{structure maps}
\[
\tau_0, \ \delta_0, \ \delta_0^\dagger, \ \pi_0 : \vNa_0 \to \vNa
\]
must satisfy certain algebraic relations, thanks to the unital and
$*$-homomorphic properties of $j$. Equation~\eqref{eqn:iqsde}
generalises~\eqref{eqn:ls}, which corresponds to the case
where~$\vNa_0 = \dom \delta^2$, $\pi_0$ is the inclusion map,
\[
\delta_0 = \delta^\dagger_0 = \delta |_{\vNa_0} %
\qquad \mbox{and} \qquad \tau_0 = \hlf \delta^2.
\]
The appearance of the non-zero gauge term $\pi_0$ is due to the fact
that we are working in the vacuum-adapted set-up:
\cf~\cite[Theorem~7.3]{Blt10}. It follows from~\eqref{eqn:iqsde} that
the quantum stochastic flow satisfies the equation
\[
\langle u \Vac, j_t( x ) v \Vac \rangle = %
\langle u, v \rangle + %
\int_0^t \langle u \Vac, j_s( \tau_0( x ) ) v \Vac \rangle \intd s %
\qquad ( u, v \in \ini, \, t \ge 0, \, x \in \vNa_0 ),
\]
where $\Vac$ denotes the Fock vacuum vector. The generator of the
\emph{vacuum-expectation semigroup}
$\semigroup^0 := ( \expn \comp j_t )_{t \ge 0}$ therefore extends the
map~$\tau_0$. A natural assumption here is that~$\tau_0$ is a
pre-generator of $\semigroup^0$, however our results do not require
it.

Starting with Evans and Hudson \cite{EvH90}, several authors have used
conjugation with a unitary process to perturb quantum stochastic
flows. These works focused on the case of bounded structure maps, so
that the vacuum-expectation semigroup~$\semigroup^0$ is norm
continuous, and considered identity-adapted flows and processes. For
$h = h^* \in \vNa$ and $l \in \vNa$ there exists a unitary process $U$
such that
\[
U_0 = \id, \qquad \rd U_t = %
j_t( l ) U_t \intd A^\dagger_t + %
j_t( -l^* ) U_t \intd A_t +
j_t( -\I h - \hlf l^* l ) U_t \intd t,
\]
and the vacuum-expectation semigroup of the perturbed flow
$( a \mapsto U_t^* j_t( a ) U_t )_{t \ge 0}$ has generator
\[
\tau_0 + %
\lambda_{l^*} \delta_0 + \rho_l \delta^\dagger_0 + %
\lambda_{l^*} \rho_l \pi_0 + %
\I [ h, \, \cdot \, ] - \hlf \{ l^* l, \, \cdot \, \},
\]
where $[ \, \cdot \, , \, \cdot \, ]$ and
$\{ \, \cdot \, , \, \cdot \, \}$ denote commutator and
anticommutator. The main result obtained here includes this situation
as a special case.

For any vacuum-adapted quantum stochastic flow $j$ and any
$c = %
\left[\begin{smallmatrix} c_0 \\[0.5ex] c_1 \end{smallmatrix}\right]$
in $\vNa \oplus \vNa$, Theorem~\ref{thm:main} below gives a
process~$M^c$ such that $M^c - \id$ is vacuum adapted and the
following quantum stochastic differential equation is satisfied:
\[
\rd ( M^c - \id )_t = %
j_t( c_0 ) M^c_t \intd t + j_t( c_1 ) M^c_t \intd A^\dagger_t.
\]
Consequently, for any
$d = \smash[b]{%
\left[\begin{smallmatrix} d_0 \\[0.5ex] d_1 \end{smallmatrix}\right]}$
in $\vNa \oplus \vNa$, there is an ultraweakly continuous semigroup
$\semigroup^{c, d}$ on~$\vNa$ with
\[
\langle u, \semigroup^{c, d}_t( a ) v \rangle = %
\langle u \Vac, (M^c_t)^* j_t( a ) M^d_t v \Vac \rangle %
\qquad ( u, v \in \ini, \, t \ge 0, \, a \in \vNa ).
\]
When $j$ satisfies~\eqref{eqn:iqsde}, the ultraweak generator
of~$\semigroup^{c, d}$ necessarily extends
\begin{equation}\label{eqn:ourgen}
\tau_0 + %
\lambda_{c_1^*} \delta_0 + %
\rho_{d_1} \delta^\dagger_0 + %
\lambda_{c_1^*} \rho_{d_1} \pi_0 + %
\lambda_{c_0^*} + \rho_{d_0}.
\end{equation}
This class of semigroups includes both the Lindsay--Sinha and the
Bahn--Park examples, as well as those obtained by unitary conjugation;
the generators of the latter correspond to the case
 \[
c = d = \begin{bmatrix} -\I h - \hlf l^*l \\[1ex] l \end{bmatrix},
\quad \mbox{where } h = h^*.
\]

\subsection{Conventions}
Hilbert spaces are complex with inner products linear in their second
argument. The linear, Hilbert-space and ultraweak tensor products are
denoted by~$\algten$, $\otimes$ and $\uwkten$, respectively. For a
Hilbert space $\hilb$ we adopt the Dirac-inspired notation
$\ket{\hilb}$ for $\bop{\C}{\hilb}$ and $\bra{\hilb}$ for the
topological dual $\bop{\hilb}{\C}$, writing $\ket{u}$ for the operator
$\lambda \mapsto \lambda u$ and $\bra{u}$ for the functional
$v \mapsto \langle u, v\rangle$, where $u \in \hilb$. Recall the
\emph{$E$ notation},
\begin{equation}\label{eqn:Enotation}
E_u := \ket{u} \otimes \id \quad \mbox{and} \quad %
E^u := ( E_u )^* = \bra{u} \otimes \id \qquad ( u \in \hilb ),
\end{equation}
in which $\id$ denotes the identity operator on a Hilbert space
determined by context. The following commutator and anticommutator
notation is also used for elements of an algebra:
\begin{equation}\label{eqn:commutator}
[ a , b ]:= a b - b a \quad \mbox{and} \quad %
\{ a, b \}:= a b + b a.
\end{equation}

\section{Multipliers for quantum stochastic flows}

Fix now, and for the rest of the paper, Hilbert spaces~$\ini$
and~$\mul$, referred to as the \emph{initial space} and
\emph{multiplicity space} or \emph{noise dimension space},
respectively. Fix also a von~Neumann algebra~$\vNa$ acting faithfully
on $\ini$. Set $\widehat{\mul}:= \C \oplus \mul$,
\begin{equation}\label{eqn:chat}
\widehat{c} := \begin{pmatrix} \,1\, \\ \,c\, \end{pmatrix} \in \mmul %
\quad ( c \in \mul ) \qquad \mbox{ and } \qquad
\vac := \widehat{0} = \begin{pmatrix} \,1\, \\ \,0\, \end{pmatrix}.
\end{equation}
Our basic reference for quantum stochastic calculus is~\cite{Lin05}.

For a subinterval $J$ of $\R_+$, let $\fock_J$ denote the Boson Fock
space over $L^2( J; \mul )$ and let $\vNnoise_J := \bop{\fock_J}{}$.
For brevity, set $\fock := \fock_{\R_+}$,
$\fock_{t)} := \fock_{[ 0, t )}$ and
$\fock_{[t} := \fock_{[ t, \infty )}$, with corresponding
abbreviations for the noise algebra $\vNnoise = \bop{\fock}{}$. The
identifications
\[
\fock = \fock_{s)} \otimes \fock_{[s} = %
\fock_{s)} \otimes \fock_{[ s, t )} \otimes \fock_{[t} %
\qquad ( 0 \le s \le t < \infty ),
\]
which arise from the exponential property of Fock space, entail the
identifications
\[
\vNnoise = \vNnoise_{s)} \uwkten \vNnoise_{[s} = %
\vNnoise_{s)} \uwkten \vNnoise_{[ s, t )} \uwkten \vNnoise_{[t} %
\qquad ( 0 \le s \le t < \infty ).
\]
The notation $\Vac_J$, $\id_J$ and $\idop_J$ for the vacuum vector in
$\fock_J$, the identity operator on $\fock_J$ and the identity map on
$\vNnoise_J$, respectively, is also useful, with corresponding
abbreviations for other intervals, such as $\Vac_{[s}$, $\id_{[s}$ and
$\idop_{[s}$, as above.

Denote by $\Delta$ any of the following projections:
\begin{equation}\label{eqn:abuse}
P_\mul \in \bop{\mmul}{}, \quad %
P_\mul \otimes \idop_\vNa \in \bop{\mmul}{} \uwkten \vNa %
\quad \mbox{and} \quad %
P_\mul \otimes \idop_\vNa \otimes \id_\fock \in %
\bop{\mmul}{} \uwkten \vNa \uwkten \vNnoise,
\end{equation}
where
$P_\mul = %
\left[\begin{smallmatrix}
 0 & 0 \\[0.5ex]
 0 & \id_\mul
\end{smallmatrix}\right] \in \bop{\mmul}{}$
is the orthogonal projection onto $\mul$.

The right shift
\[
s_t : \elltwo \to L^2( [ t, \infty ) ; \mul ); \ %
f \mapsto f( \, \cdot - t ) \qquad ( t \ge 0 )
\]
has second quantisation
\[
S_t : \fock \to \fock_{[t}; \ \evec{f} \mapsto \evec{s_t f},
\]
where $\evec{g}$ denotes the exponential vector corresponding to the
vector~$g$, and the map
\[
\sigma_t : \vNa \uwkten \vNnoise{} \to %
\vNa \uwkten \vNnoise_{[t}{}; \ %
T \mapsto ( \id_\ini \otimes S_t ) T ( \id_\ini \otimes S_t )^*
\]
is a normal $*$-isomorphism for all $t \ge 0$.

\begin{definition}\label{def:cocycle}
A \emph{vacuum-adapted quantum stochastic cocycle} $k$ on $\vNa$ is a
family of normal completely bounded maps
$( k_t :\vNa \to \vNa \uwkten \vNnoise{} )_{t \ge 0}$ such
that, for all $a \in \vNa$ and $s$, $t \ge 0$,
\begin{mylist}
\item[($\Omega$-C i)] $k_0( a ) = a \otimes \ket{\Vac}\bra{\Vac}$,
\item[($\Omega$-C ii)]
$k_t( a ) = k_{t)}( a ) \otimes \ket{\Vac_{[t}}\bra{\Vac_{[t}}$,
where $k_{t)}( a ) \in \vNa \uwkten \vNnoise_{t)}{}$,
\item[(C iii)]
$k_{s + t} = \widehat{k}_s \comp \sigma_s \comp k_t$,
where $\widehat{k}_s := k_{s)} \uwkten \idop_{[s}$
\item[and (C iv)] $r \mapsto k_r( a )$ is ultraweakly continuous.
\end{mylist}
Such a family is a \emph{flow} on $\vNa$ if each $k_{t)}$ is
 $*$-homomorphic and unital. Following tradition we use the letter $j$
for quantum stochastic flows.
\end{definition}

In the standard theory, ($\Omega$-C~i) and ($\Omega$-C~ii) are
replaced by their identity-adapted counterparts,
\begin{mylist}
\item[($I$-C i)] $k_0( a ) = a \otimes \id_\fock$
\item[and ($I$-C ii)] $k_t( a ) = k_{t)}( a ) \otimes \id_{[t}$,
where $k_{t)}( a ) \in \vNa \uwkten \vNnoise_{t)}{}$.
\end{mylist}

\begin{remark}
The prescription
\begin{equation}\label{eqn:bijective}
k^{(\Vac)} = \bigl( %
k_{t)}( \, \cdot \, ) \otimes \ket{\Vac_{[t}}\bra{\Vac_{[t}} %
\bigr)_{t\ge 0} \ \mapsto \ %
k^{(I)} = \bigl( %
k_{t)}( \, \cdot \, ) \otimes \id_{[t} %
\bigr)_{t\ge 0}
\end{equation}
gives a bijective correspondence between the class of vacuum-adapted
quantum stochastic cocycles and the class of identity-adapted
quantum stochastic cocycles. Note that
\begin{equation}
k_{t)}( a ) = E^{\Vac_{[t}} k_t( a ) E_{\Vac_{[t}} %
\qquad ( t \ge 0, \, a \in \vNa )
\end{equation}
in both cases.
\end{remark}

In terms of the orthogonal projection
\begin{equation}\label{eqn:vacproj}
P_t := %
\id_\ini \otimes I_{t)} \otimes \ket{\Vac_{[t}}\bra{\Vac_{[t}},
\end{equation}
condition ($\Vac$-C~ii) becomes
\[
k_t( a ) = P_t k_t( a ) P_t,
\]
whereas ($I$-C~ii) only implies the weaker commutation relation
\[
k_t( a ) P_t = P_t k_t( a ).
\]

Let
\[
\expn := %
\idop_\vNa \uwkten \omega_\Vac : \vNa \uwkten \vNnoise{} \to \vNa
\]
denote the \emph{vacuum expectation}, where $\omega_\Vac$ is the state
on $\vNnoise$ corresponding to the vacuum vector~$\Vac$.

\begin{proposition}\label{prp:semigroup}
Let $k$ be a vacuum-adapted quantum stochastic cocycle on $\vNa$.
The ultraweakly continuous family of normal completely bounded
maps $( \expn \circ k_t )_{t \ge 0}$ on $\vNa$ forms a semigroup,
called the \emph{vacuum-expectation semigroup} of $k$.
\end{proposition}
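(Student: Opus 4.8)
The plan is to deduce the semigroup law $\expn \comp k_{s+t} = ( \expn \comp k_s ) \comp ( \expn \comp k_t )$ directly from the cocycle identity (C~iii), exploiting the way the vacuum expectation factorises across the tensor splitting of $\vNnoise$ at time~$s$. Two preliminary observations do most of the work. First, since the right shift $s_s$ fixes $0 \in \elltwo$ and $\evec{0} = \Vac$, its second quantisation satisfies $S_s \Vac = \Vac_{[s}$; as $S_s$ is unitary from $\fock$ onto $\fock_{[s}$, this yields $\omega_{\Vac_{[s}}\bigl( S_s X S_s^* \bigr) = \omega_\Vac( X )$ for all $X \in \vNnoise$ --- in other words, conjugation by the time-shift is invisible to the vacuum. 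Secondly, because $\omega_{\Vac_{[s}}\bigl( \ket{\Vac_{[s}} \bra{\Vac_{[s}} \bigr) = 1$, condition ($\Vac$-C~ii) gives $\expn \comp k_s = \expn_{s)} \comp k_{s)}$, where $\expn_{s)} := \idop_\vNa \uwkten \omega_{\Vac_{s)}} \colon \vNa \uwkten \vNnoise_{s)} \to \vNa$.

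Next I would carry out the computation. Using (C~iii), the factorisation $\omega_\Vac = \omega_{\Vac_{s)}} \uwkten \omega_{\Vac_{[s}}$ relative to $\vNnoise = \vNnoise_{s)} \uwkten \vNnoise_{[s}$, the compatibility of composition with ultraweak tensor products, and the second observation above, one obtains
\[
\expn \comp k_{s+t}
= ( \idop_\vNa \uwkten \omega_\Vac ) \comp ( k_{s)} \uwkten \idop_{[s} ) \comp \sigma_s \comp k_t
= \bigl( ( \expn \comp k_s ) \uwkten \omega_{\Vac_{[s}} \bigr) \comp \sigma_s \comp k_t .
\]
The heart of the matter is then the identity
\[
\bigl( ( \expn \comp k_s ) \uwkten \omega_{\Vac_{[s}} \bigr) \comp \sigma_s = ( \expn \comp k_s ) \comp \expn
\qquad \text{as maps } \vNa \uwkten \vNnoise \to \vNa .
\]
Since $\sigma_s$ leaves the $\vNa$-factor untouched and acts by $X \mapsto S_s X S_s^*$ on the noise, both sides send a simple tensor $a \otimes X$ to $\omega_\Vac( X ) \, ( \expn \comp k_s )( a )$ by the first observation; as all the maps in sight are normal and the algebraic span of simple tensors is ultraweakly dense in $\vNa \uwkten \vNnoise$, the identity follows in general. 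Substituting it above gives $\expn \comp k_{s+t} = ( \expn \comp k_s ) \comp \expn \comp k_t = ( \expn \comp k_s ) \comp ( \expn \comp k_t )$. Finally $\expn \comp k_0 = \idop_\vNa$ is immediate from ($\Vac$-C~i), and ultraweak continuity of $r \mapsto ( \expn \comp k_r )( a )$ follows from (C~iv) together with normality of $\expn$.

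I do not expect a genuine obstacle here: the argument is essentially bookkeeping with the threefold splitting $\vNnoise = \vNnoise_{s)} \uwkten \vNnoise_{[s}$ and with slice maps. The one spot where the structure of quantum stochastic calculus rather than pure algebra is used is the first observation, that conjugation by $S_s$ does not move the vacuum state; this is exactly what makes the vacuum expectation over $[ s, \infty )$ produced by the cocycle relation coincide with the vacuum expectation over all of $\R_+$. One should also verify that the completely bounded normal maps involved really do amalgamate into well-defined maps on the relevant ultraweak tensor products, but that is routine.
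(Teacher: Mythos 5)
Your argument is correct and is essentially the paper's proof: the paper isolates the identity $\expn^\Vac_s \comp \widehat{k}_s \comp \sigma_s = k_s \comp \expn$ (where $\expn^\Vac_s$ is the vacuum conditional expectation at time $s$) together with the tower property $\expn \comp \expn^\Vac_s = \expn$, which is exactly your slice-map computation with $\expn$ applied on the left. The two key ingredients --- shift-invariance of the vacuum state under $X \mapsto S_s X S_s^*$ and the use of ($\Vac$-C~ii) to reduce $\expn \comp k_s$ to a slice over $\fock_{s)}$ --- are the same in both versions.
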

\begin{proof}
For all $t \ge 0$, the conditional expectation
\begin{equation}\label{eqn:condexp}
\expn^\Vac_t : \vNa \uwkten \vNnoise{} \to \vNa \uwkten \vNnoise{};%
\ T \mapsto \bigl( \idop_{\vNa \uwkten \vNnoise_{t)}{}} \uwkten %
\omega_{\Vac_{[t}} \bigr)( T ) \otimes \ket{\Vac_{[t}}\bra{\Vac_{[t}}
= P_t T P_t
\end{equation}
has the tower property $\expn \comp \expn^\Vac_t = \expn$. The claim
follows since any vacuum-adapted quantum stochastic cocycle satisfies
the identity
 \[
\expn^\Vac_t \circ \widehat{k}_t \circ \sigma_t = k_t \circ \expn %
\qquad ( t \ge 0 ).\qedhere
\]
\end{proof}

Quantum stochastic differential equations of the following form are a
basic source of quantum stochastic cocycles.

\begin{remark}
Under the correspondence~\eqref{eqn:bijective}, $k^{(\Vac)}$ satisfies
a quantum stochastic differential equation of the form
\begin{equation}\label{eqn:QSDEVac}
k_0( a ) = a \otimes \ket{\Vac}\bra{\Vac}, \qquad %
\rd k_t = \widetilde{k}_t\bigl( \psi( a ) \bigr) \intd \Lambda_t
\end{equation}
on a subset $\vNa_0$ of $\vNa$, where
$\widetilde{k}_t := \idop_{\bop{\mmul}{}} \uwkten k_t$, if and only if
$k^{(I)}$ satisfies a quantum stochastic differential equation of the
form
\begin{equation}\label{eqn:QSDE}
k_0( a ) = a \otimes \id_\fock, \qquad %
\rd k_t = \widetilde{k}_t\bigl( \phi( a ) \bigr) \intd \Lambda_t
\end{equation}
on $\vNa_0$, where the maps
$\psi$, $\phi: \vNa_0 \to \bop{\mmul}{}\uwkten \vNa$ are related by
the following identity:
\[
\psi( a ) = \phi( a ) + \Delta \otimes a \qquad ( a \in \vNa_0 ).
\]
This is proved in \cite[Theorem~7.3]{Blt10}. Here $\Lambda$ is the
matrix of fundamental quantum stochastic integrators \cite{HP84};
see \cite{Lin05}.
\end{remark}

\begin{remark}[{\cite[Section 6]{LiW00a}}]
Let the map $\phi : \vNa \to \bop{\mmul}{} \uwkten \vNa$ have the
block-matrix form
\begin{equation}\label{eqn:cocmatrix}
\phi( a ) = %
\begin{bmatrix}
 \I [ h, a ] - \hlf \{ r^* r, a \} + r^* \pi( a ) r & %
 a r^* - r^* \pi( a ) \\[1ex]
 r a - \pi( a ) r & \pi( a ) - \id_\mul \otimes a
\end{bmatrix}
\qquad ( a \in \vNa ),
\end{equation}
where $h \in \vNa$ is self adjoint, $r \in \ket{\mul} \uwkten \vNa$
and $\pi : \vNa \to \bop{\mul}{} \uwkten \vNa$ is a normal unital
$*$-homomorphism. Then the quantum stochastic differential
equation~\eqref{eqn:QSDE} has a unique solution and this is an
identity-adapted quantum stochastic flow. Conversely, if an
identity-adapted quantum stochastic flow satisfies~\eqref{eqn:QSDE}
for some normal bounded map
$\phi : \vNa \to \bop{\mmul}{} \uwkten \vNa$ then $\phi$ has the
form~\eqref{eqn:cocmatrix}.
\end{remark}

\begin{definition}\label{def:multiplier}
Let $j$ be a vacuum-adapted quantum stochastic flow on $\vNa$. A
family of operators
$M = ( M_t )_{t \ge 0 }$ in $\vNa \uwkten \vNnoise{}$ is a
\emph{multiplier} for $j$ if, for all $s$, $t \ge 0$,
\begin{mylist}
\item[(M i)] $M_0 = \id_{\ini \otimes \fock}$,
\item[(M ii)] $M_t P_t = P_t M_t$,
\item[(M iii)] $M_{s + t} = J_s( M_t ) M_s$, where
$J_s:= \widehat{\jmath}_s \comp \sigma_s$
\item[and (M iv)] $r \mapsto M_r$ is strongly continuous.
\end{mylist}
The Banach--Steinhaus Theorem and condition~(M iv) imply that $M$ is
locally bounded.
\end{definition}

\begin{theorem}[{\Cf~\cite[Theorem~2.1]{BaP03}}]%
\label{thm:semigroup}
Let $M$ and $N$ be multipliers for the vacuum-adapted quantum
stochastic flow $j$. The ultraweakly continuous normal completely
bounded family
\[
\semigroup := %
\bigl( a \mapsto \expn[ M_t^* j_t( a ) N_t ] \bigr)_{t \ge 0}
\]
forms a semigroup, which is completely contractive if $M$ and $N$ are
contractive and is completely positive if $M = N$.
\end{theorem}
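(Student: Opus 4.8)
The plan is to verify the semigroup property by direct computation from the cocycle and multiplier identities, postponing the analytic (continuity, boundedness, positivity) claims to the end. First I would record the key algebraic facts: the flow identity $j_{s+t} = \widehat{\jmath}_s \comp \sigma_s \comp j_t = J_s \comp j_t$ (with the caveat that the flow composition carries an implicit vacuum-adaptation, so strictly $j_{s+t}(a) = \widehat{\jmath}_s(\sigma_s(j_t(a)))$ and likewise $M_{s+t} = J_s(M_t) M_s$, $N_{s+t} = J_s(N_t) N_s$), and the fact that $J_s$ is a normal $*$-homomorphism, being the composite of the $*$-isomorphism $\sigma_s$ with the ampliated flow map $\widehat{\jmath}_s = \jmath_{s)} \uwkten \idop_{[s}$. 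I would also note $E^\Vac_t = P_t \,\cdot\, P_t$ and the tower property $\expn \comp \expn^\Vac_t = \expn$ from the proof of Proposition~\ref{prp:semigroup}.

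\par Next I would compute $\semigroup_{s+t}(a) = \expn[M_{s+t}^* j_{s+t}(a) N_{s+t}]$. Substituting the cocycle decompositions gives $\expn\bigl[ M_s^* J_s(M_t)^* \, J_s(j_t(a)) \, J_s(N_t) N_s \bigr]$, and since $J_s$ is multiplicative and $*$-preserving this is $\expn\bigl[ M_s^* \, J_s\bigl( M_t^* j_t(a) N_t \bigr) \, N_s \bigr]$. The heart of the argument is then the identity (already used in Proposition~\ref{prp:semigroup} in the form $\expn^\Vac_t \comp \widehat{k}_t \comp \sigma_t = k_t \comp \expn$, here applied to the multiplier-twisted object) together with the observation that $M_s, N_s \in \vNa \uwkten \vNnoise_{s)}$ up to a vacuum tensor factor on $[s,\infty)$, so that $\expn^\Vac_s$ can be moved inside, producing $\expn\bigl[ M_s^* \, j_s\bigl( \expn[ M_t^* j_t(a) N_t ] \bigr) \, N_s \bigr] = \expn\bigl[ M_s^* j_s( \semigroup_t(a) ) N_s \bigr] = \semigroup_s( \semigroup_t(a) )$. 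Getting the bookkeeping right between the ``$t)$''-localised parts of $M_s$, $N_s$ and the ampliated flow $\widehat{\jmath}_s$ — i.e.\ checking that $E^{\Vac_{[s}}$ genuinely factors the twisted expression as needed, using (M~ii) and ($\Vac$-C~ii) — is where I expect the main obstacle to lie; this is the ``slightly more complicated algebra'' the introduction warns about, and is the vacuum-adapted analogue of \cite[Theorem~2.1]{BaP03}.

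\par For the remaining assertions: ultraweak continuity of $t \mapsto \semigroup_t(a)$ follows from strong continuity of $M$, $N$ (M~iv) and ultraweak continuity of $j$ (C~iv), with local boundedness of all three families (Definition~\ref{def:multiplier}) licensing the interchange of limits; normality and complete boundedness of each $\semigroup_t$ are inherited from those of $j_t$ and $\expn$ together with boundedness of $M_t$, $N_t$, since $a \mapsto \expn[M_t^* j_t(a) N_t]$ is a composition of a normal completely bounded map with left and right multiplication by fixed bounded operators. Complete contractivity when $M$, $N$ are contractive is a standard estimate: writing the map through a Stinespring-type dilation of $j_t$ and using $\|M_t\|, \|N_t\| \le 1$ gives $\|\semigroup_t\|_{\mathrm{cb}} \le 1$. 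Finally, when $M = N$ the map $a \mapsto \expn[M_t^* j_t(a) M_t]$ is manifestly completely positive, being $\expn$ (a normal conditional-expectation-type map, hence CP) precomposed with the CP map $j_t$ and then sandwiched by $M_t^*\,\cdot\,M_t$; I would spell this out via the factorisation $a \mapsto M_t^* j_t(a) M_t$ followed by $\expn$, both completely positive.
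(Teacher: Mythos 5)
Your argument is correct and follows essentially the same route as the paper: insert the conditional expectation $\expn^\Vac_s$ via the tower property, pull $M_s^*$ and $N_s$ out using (M~ii), and apply the intertwining identity $\expn^\Vac_s \comp J_s = j_s \comp \expn$ to reduce $\semigroup_{s+t}$ to $\semigroup_s \comp \semigroup_t$. The supplementary observations on contractivity and complete positivity (via $\expn[\,\cdot\,] = E^\Vac \,\cdot\, E_\Vac$ and the sandwich $M_t^*\, j_t(\,\cdot\,)\, M_t$) are also the standard ones the paper leaves implicit.
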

\begin{proof}
To prove the semigroup property, let $a \in \vNa$ and $s$,~$t \ge 0$.
By the tower property for the conditional expectation $\expn^\Vac_s$
defined in \eqref{eqn:condexp}, it follows that
\begin{align}
\semigroup_{s + t}( a ) & = %
\expn\bigl[ \expn^\Vac_s[ M_s^* J_s( M_t^* ) J_s ( j_t( a ) ) %
J_s( N_t ) N_s ] \bigr] \tag*{by (C~iii) and (M~iii)} \\[1ex]
 & = \expn[ M_s^* \expn^\Vac_s[ J_s( M_t^* j_t( a ) N_t ) ] N_s ] %
\tag*{by (M~ii)} \\[1ex]
 & = \expn\bigl[ M_s^* j_s( \expn[ M_t^* j_t( a ) N_t ] ) %
N_s \bigr] \label{eqn:antepenultimate} \\[1ex]
 & = \semigroup_s( \semigroup_t ( a ) ). \nonumber
\end{align}
For the equality \eqref{eqn:antepenultimate}, note that if
$a \in \vNa$ and $b \in \vNnoise$ then
\[
\expn^\Vac_s[ J_s( a \otimes b ) ) \bigr] = %
\langle \Vac, b \Vac \rangle \, j_s( a ) = %
j_s( \expn[ a \otimes b ] );
\]
thus $\expn^\Vac_s \circ J_s = j_s \circ \expn$, by linearity and
ultraweak continuity.
\end{proof}

\begin{remark}
Some of the ideas in this section go back to early work of Accardi
\cite[Sections~2 and~4]{Acc78}; see also \cite[Section~2.3]{AFL82}.
\end{remark}

\section{A vacuum-adapted quantum stochastic differential equation}

Let $( u_t )_{t \in \R}$ be a strongly continuous one-parameter
unitary group in $\vNa$, let $B = ( B_t )_{t \ge 0}$ be the canonical
Brownian motion on Wiener's probability space $\Wiener$ and, taking
$\mul = \C$, identify $L^2( \Wiener )$ with $\fock$ \via the
Wiener--It\^o--Segal isomorphism. If the unitary operator
$U_t \in \vNa \uwkten \vNnoise$ is such that
\[
U_t \xi : \omega \mapsto u_{B_t( \omega )} \xi( \omega ) = %
u_{\omega( t )} \xi( \omega ) %
\qquad ( \xi \in L^2( \Wiener; \ini ) )
\]
then the family of maps
$\bigl( j^B_t : %
a \mapsto U_t ( a \otimes \id_\fock ) U_t^* \bigr)_{t \ge 0}$ is an
identity-adapted quantum stochastic flow on $\vNa$
(\cite[Lemma~3.1]{BaP03}, \cf~\cite[Section~5]{Lin05}).

Bahn and Park considered the operator stochastic differential equation
\begin{equation}\label{eqn:sde}
M^a_0 = \id_{\ini \otimes \fock}, \qquad %
\rd M^a_t = j^B_t( a ) P_t M^a_t \intd B_t - %
\hlf j^B_t( a^2 ) P_t M^a_t \intd t,
\end{equation}
where $a \in \vNa$, and obtained a solution pointwise in
$L^2( \Wiener; \ini )$ \cite[Proposition~3.2]{BaP03}. They showed
that the collection of operators $( M^a_t )_{t \ge 0}$ forms a
multiplier for the quantum stochastic flow~$j^B$
\cite[Proposition~3.3]{BaP03}.

Fix $a \in \vNa$ and set $N_t := M^a_t - \id_{\ini \otimes \fock}$ for
all $t \ge 0$, so that
\[
N_t \xi = %
\int_0^t Q_s \xi \intd B_s - \frac12 \int_0^t R_s \xi \intd s + %
\int_0^t Q_s N_s \xi \intd B_s - \frac12 \int_0^t R_s N_s \xi \intd s
\]
for all $\xi \in L^2( \Wiener; \ini )$, where
\[
Q_t := j^B_t( a ) P_t \qquad \mbox{and} \qquad %
R_t := j^B_t( a^2 ) P_t.
\]
As $\bigl( j^B_t( b ) \bigr)_{t \ge 0}$ is identity adapted for all
$b \in \vNa$, the processes $Q$ and~$R$ are vacuum adapted. By
\cite[Theorem~2.2]{Blt07}, the process $N$ above is the unique
vacuum-adapted solution of the quantum stochastic differential
equation
\begin{equation}\label{eqn:bpqsde}
N_0 = 0, \qquad %
\rd N_t = Q_t \intd A^\dagger_t - \hlf R_t \intd t + %
Q_t N_t \intd A^\dagger_t - \hlf R_t N_t \intd t.
\end{equation}

To see that~\eqref{eqn:bpqsde} is the correct quantum stochastic
generalisation of~\eqref{eqn:sde}, for simplicity take $\ini = \C$ and
let~$\stex{f}$ denote the Brownian exponential corresponding to
$f \in L^2( \R_+ )$, \ie the unique element of $L^2( \Wiener )$ such
that
\[
\stex{f}_t := \expn_\Wiener[\stex{f} | \filt_t ] = 1 + %
\int_0^t f( s ) \expn_\Wiener[\stex{f} | \filt_s ] %
\intd B_s \qquad ( t \ge 0 ),
\]
where $( \filt_t )_{t \ge 0}$ is the canonical filtration generated by
the Brownian motion~$B$. (Recall that~$\stex{f}$ corresponds
to~$\evec{f}$ and $\expn_\Wiener[ \, \cdot \, | \filt_t ]$ to~$P_t$.)
If~$( X_t )_{t \ge 0}$ is a process of bounded operators on $\fock$
with locally bounded norm and such that $X_t P_t = P_t X_t$ for all
$t \ge 0$ then, by the (classical) It\^o product formula,
\begin{align*}
\expn_\Wiener\Bigl[ \overline{\stex{f}} %
\int_0^t X_s P_s \stex{g} \intd B_s \Bigr] %
& = \expn_\Wiener\Bigl[ \int_0^t \overline{f( s ) \stex{f}_s} %
 X_s \stex{g}_s \intd s \Bigr] \\
 & = \Bigl\langle \evec{f}, %
\int_0^t X_s P_s \intd A^\dagger_s \evec{g} \Bigr\rangle %
\qquad ( f, g \in L^2( \R_+ ) ).
\end{align*}

\begin{definition}
For a Hilbert space $\hilb$, a \emph{bounded process in}
$\bop{\hilb}{} \uwkten \vNa$ is a family of operators
$Z = ( Z_t )_{t \ge 0}$ in
$\bop{\hilb}{} \uwkten \vNa \uwkten \vNnoise$ such that
\[
t \mapsto \langle \zeta', Z_t \zeta \rangle \mbox{ is measurable }
\qquad (\zeta, \zeta' \in \hilb \otimes \ini \otimes \fock);
\]
such a process is \emph{vacuum adapted} if
\[
Z_t = ( \id_\hilb \otimes P_t ) Z_t ( \id_\hilb \otimes P_t ) %
\qquad ( t \ge 0 )
\]
or, equivalently,
\[
Z_t = Z_{t)} \otimes \ket{\Vac_{[t}}\bra{\Vac_{[t}} %
\quad \mbox{for some } %
Z_{t)} \in \bop{\hilb}{} \uwkten \vNa \uwkten \vNnoise_{[ 0, t )} %
\quad ( t \ge 0 ).
\]
A vacuum-adapted bounded process $G$ in $\bop{\mmul}{} \uwkten \vNa$
is an \emph{integrand} process if its block-matrix form
$\left[ \begin{smallmatrix} k & m \\[0.5ex]
 l & n \end{smallmatrix} \right]$
is such that
\[
\| G \|_t := \| k \|_{1, t} + %
\| l \|_{2, t} + %
\| m \|_{2, t} + %
\| n \|_{\infty, t} < \infty
 \quad
 (t \ge 0),
\]
where, for $p = 1, 2$ or $\infty$, $\| f \|_{p, t}$ denotes the
$L^p$~norm of the function $1_{[ 0, t )} f$.
\end{definition}

The following result is the coordinate-independent version of
\cite[Proposition~37]{Blt04}, with non-trivial initial space. Recall
the notation~\eqref{eqn:Enotation} and~\eqref{eqn:chat}.

\begin{proposition}
Let $G$ be an integrand process. There is a unique bounded
vacuum-adapted process
$\int G \intd \Lambda = %
\bigl( \int_0^t G_s \intd \Lambda_s \bigr)_{t \ge 0}$ in $\vNa$
such that
\[
\langle u \evec{f}, %
\int_0^t G_s \intd \Lambda_s v \evec{g} \rangle = %
\int_0^t \langle u \evec{f}, %
E^{\widehat{f( s )}} G_s E_{\widehat{g( s )}} v \evec{g} %
\rangle \intd s \qquad ( t \ge 0 )
\]
for all $u$,~$v \in \ini$ and $f$,~$g \in \elltwo$. Moreover, the
following inequality holds:
 \[
\| \int_0^t G_s \intd \Lambda_s \| \le \| G \|_t %
\qquad ( t \ge 0 ).
\]
\end{proposition}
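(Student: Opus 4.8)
The plan is to follow the standard construction of a quantum stochastic integral, carried out within the vacuum-adapted calculus; as the excerpt notes, this is the coordinate-independent form of \cite[Proposition~37]{Blt04}, and beyond the content of that result only routine bookkeeping is needed to accommodate a non-trivial initial space. Uniqueness is immediate: the vectors $u \evec{f}$, with $u \in \ini$ and $f \in \elltwo$, have total linear span in $\ini \otimes \fock$, so a bounded operator on $\ini \otimes \fock$ is determined by its matrix elements against such vectors, and the displayed identity prescribes all of these for each $t$.

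For existence I would first check that the right-hand side makes sense. Writing the integrand process in block form $G_s = \left[ \begin{smallmatrix} k_s & m_s \\[0.5ex] l_s & n_s \end{smallmatrix} \right]$ and using $\widehat{c} = \left[ \begin{smallmatrix} 1 \\[0.5ex] c \end{smallmatrix} \right]$, one has
\[
E^{\widehat{f( s )}} G_s E_{\widehat{g( s )}} =
k_s + \bra{f( s )} l_s + m_s \ket{g( s )} + \bra{f( s )} n_s \ket{g( s )} \in \vNa \uwkten \vNnoise ,
\]
of norm at most $\| k_s \| + \| f( s ) \| \, \| l_s \| + \| g( s ) \| \, \| m_s \| + \| f( s ) \| \, \| g( s ) \| \, \| n_s \|$. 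Measurability of $s \mapsto \langle u \evec{f}, E^{\widehat{f( s )}} G_s E_{\widehat{g( s )}} v \evec{g} \rangle$ comes from the measurability hypothesis on $G$ together with that of $f$ and $g$; integrability over $[ 0, t ]$ follows from $\| k \|_{1, t} < \infty$, from the Cauchy--Schwarz bounds $\int_0^t \| f( s ) \| \, \| l_s \| \intd s \le \| f \|_{2, t} \| l \|_{2, t}$ and $\int_0^t \| g( s ) \| \, \| m_s \| \intd s \le \| g \|_{2, t} \| m \|_{2, t}$, and from $\int_0^t \| f( s ) \| \, \| g( s ) \| \intd s \le \| f \|_{2, t} \| g \|_{2, t}$ with $\| n \|_{\infty, t} < \infty$. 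Thus a sesquilinear form $Q_t$ on the exponential domain is well defined by the right-hand side.

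The crux is the estimate $| Q_t( \xi, \eta ) | \le \| G \|_t \, \| \xi \| \, \| \eta \|$ for $\xi, \eta$ in the exponential domain; granted this, $Q_t$ extends to a bounded operator of norm at most $\| G \|_t$, which is the required $\int_0^t G_s \intd \Lambda_s$. The $k$-contribution to $Q_t$ is bounded by $\| k \|_{1, t} \| \xi \| \, \| \eta \|$ directly, but the remaining three contributions cannot be controlled through matrix elements alone: the estimates above leave residual factors $\| f \|_{2, t}$, $\| g \|_{2, t}$ that are not dominated by the exponential-vector norms, so one must argue at the operator level. I would establish the estimate first for \emph{simple} integrand processes $G_s = \sum_i G^{( i )} \indf{[ t_i, t_{i + 1} )}( s )$, each $G^{( i )}$ vacuum adapted and in $\bop{\mmul}{} \uwkten \vNa \uwkten \vNnoise_{t_i )}$; for these $\int_0^t G_s \intd \Lambda_s = \sum_i G^{( i )} ( \Lambda_{t_{i + 1}} - \Lambda_{t_i} )$ is a finite sum of bounded vacuum-adapted operators in $\vNa \uwkten \vNnoise$, and $\| \int_0^t G_s \intd \Lambda_s \| \le \| G \|_t$ is a vacuum-adapted first fundamental estimate: on expanding $\| \int_0^t G_s \intd \Lambda_s \, \xi \|^2$, the vacuum projections $P_{t_i}$ carried by the integrands remove precisely the future components of $\xi$ that, in the identity-adapted theory, generate the correction terms in the first fundamental inequality, and what remains is a clean It\^o-type isometric bound, governed by $\| l \|_{2, t}$ and $\| m \|_{2, t}$ for the creation and annihilation parts and by $\| n \|_{\infty, t}$ for the gauge part. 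I expect this to be the main obstacle, and it is exactly the point at which vacuum adaptedness is used.

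Finally, a general integrand process $G$ is approximated by simple ones $G^{( r )}$ chosen so that $k^{( r )} \to k$ in $L^1$, $l^{( r )}, m^{( r )} \to l, m$ in $L^2$, and $n^{( r )} \to n$ boundedly and in the strong operator topology pointwise in $s$, with $\| G^{( r )} \|_t \le \| G \|_t$. The estimate just proved shows that $\int_0^t G^{( r )}_s \intd \Lambda_s$ converges strongly; its limit is independent of the approximating sequence, since its matrix elements are the prescribed form $Q_t$, it has norm at most $\| G \|_t$ by lower semicontinuity of the operator norm under strong convergence, and it is vacuum adapted and lies in $\vNa \uwkten \vNnoise$, each of these properties passing to strong limits. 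The defining matrix-element identity holds for simple processes by the elementary algebra of the fundamental integrators $\intd t$, $\intd A^\dagger_t$, $\intd A_t$, $\intd \Lambda_t$ against exponential vectors, and extends to the general case by continuity in $\xi$, $\eta$ and in $G$. This completes the construction; the only non-routine ingredient, as indicated, is the vacuum-adapted first fundamental estimate for simple integrands.
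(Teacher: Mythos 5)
The paper gives no proof of this proposition at all: it is stated as "the coordinate-independent version of \cite[Proposition~37]{Blt04}, with non-trivial initial space", and the argument is deferred entirely to that reference. Your sketch is a correct reconstruction of that standard construction — in particular you rightly identify that the matrix-element bounds alone cannot yield the operator-norm estimate and that the crux is the vacuum-adapted fundamental inequality for simple integrands, where the vacuum projections reduce the creation, annihilation and gauge parts to clean It\^o-isometric bounds governed by $\| l \|_{2,t}$, $\| m \|_{2,t}$ and $\| n \|_{\infty,t}$ — so there is nothing to compare it against beyond noting that it is consistent with the cited source.
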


We shall need to pass suitably adapted operators inside quantum
stochastic integrals. The next lemma takes care of this.

\begin{lemma}\label{lem:inside}
Let $G$ be an integrand process such that $G \Delta \equiv 0$ and let
$X$ be a bounded vacuum-adapted process in $\vNa$. Then
\begin{equation}
\int_s^t G_r \intd \Lambda_r \, X_s = %
\int_s^t G_r ( \id_\mmul \otimes X_s ) \intd \Lambda_r %
\qquad ( 0 \le s \le t ).
\end{equation}
\end{lemma}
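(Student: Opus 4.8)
The plan is to verify the claimed identity by testing both sides against exponential vectors, reducing everything to the defining integral formula for $\int G\intd\Lambda$. Fix $0\le s\le t$, vectors $u,v\in\ini$ and $f,g\in\elltwo$. On the left-hand side, since $X_s$ is a bounded vacuum-adapted operator in $\vNa\uwkten\vNnoise$, write $X_s = X_{s)}\otimes\ket{\Vac_{[s}}\bra{\Vac_{[s}}$; applying $X_s$ to $v\evec{g}$ produces $X_{s)}(v\evec{g_{s)}})\otimes\langle\Vac_{[s},\evec{g_{[s}}\rangle\Vac_{[s}$, i.e.\ a vector of the form $w\evec{g1_{[0,s)}}$ for a suitable $w\in\ini\otimes\fock_{s)}$ (up to the scalar $\langle\Vac_{[s},\evec{g_{[s}}\rangle$, which I will carry along). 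Then apply the defining formula of the previous proposition with the understanding that the integral $\int_s^t$ is $\int_0^t-\int_0^s$: one obtains
\[
\langle u\evec{f},\int_s^t G_r\intd\Lambda_r\,X_s\,v\evec{g}\rangle
= \int_s^t \langle u\evec{f},E^{\widehat{f(r)}}G_r E_{\widehat{(g1_{[0,s)})(r)}}\,w\evec{g1_{[0,s)}}\rangle\intd r.
\]
For $r\in[s,t)$ we have $(g1_{[0,s)})(r)=0$, so $E_{\widehat{(g1_{[0,s)})(r)}}=E_{\widehat 0}=E_\vac=\ket{\vac}\otimes\id$. This is the place where the hypothesis $G\Delta\equiv 0$ enters: writing $\widehat{g(r)}=\vac + \ket{0\oplus g(r)}$ and noting $\ket{0\oplus g(r)}$ lands in the range of $\Delta=P_\mul$, we get $G_rE_{\widehat{g(r)}} = G_rE_\vac + G_r\Delta E_{0\oplus g(r)} = G_rE_\vac$. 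Hence on $[s,t)$ we may freely replace $E_{\widehat{(g1_{[0,s)})(r)}}$ by $E_{\widehat{g(r)}}$ without changing the integrand.

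Next I unwind $w$ and the scalar: since $E_\vac$ and $X_s$ act on disjoint tensor legs in the relevant sense — $X_s$ on $\ini\otimes\fock$, and $\id_\mmul\otimes X_s$ differs from $X_s$ only by the spectator leg $\mmul$ — one checks the operator identity $G_r E_\vac X_s = G_r(\id_\mmul\otimes X_s)E_\vac$ on $\ini\otimes\fock$, valid because $E_\vac=\ket{\vac}\otimes\id_{\ini\otimes\fock}$ commutes past $X_s$ by simply inserting the extra $\mmul$-leg. Combining this with the vacuum-adaptedness bookkeeping that produced $w$ and the scalar $\langle\Vac_{[s},\evec{g_{[s}}\rangle$, the right-hand side integrand at parameter $r\in[s,t)$ becomes $\langle u\evec{f}, E^{\widehat{f(r)}}\,G_r(\id_\mmul\otimes X_s)\,E_{\widehat{g(r)}}\,v\evec{g}\rangle$, which is exactly the defining integrand for $\int_s^t G_r(\id_\mmul\otimes X_s)\intd\Lambda_r$ tested against $u\evec{f}$ and $v\evec{g}$. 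Since both sides of the claimed equation are bounded operators (by the norm bound in the previous proposition, $\id_\mmul\otimes X_s$ being an integrand-preserving multiplication as $X$ is bounded and vacuum-adapted) and exponential vectors are total, the identity follows.

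The main obstacle I anticipate is purely bookkeeping rather than conceptual: carefully tracking how $X_s$, written via its vacuum-adapted factorisation across $\fock_{s)}\otimes\fock_{[s}$, interacts with the exponential vector $\evec{g}=\evec{g1_{[0,s)}}\otimes\evec{g1_{[s,t)}}\otimes\cdots$ and with the $E$-notation contractions $E^{\widehat{f(r)}}$, $E_{\widehat{g(r)}}$ that live on the $\mmul$-leg; one must be sure that inserting $\id_\mmul\otimes X_s$ genuinely reproduces the action of $X_s$ and does not collide with the time-$r$ noise leg being integrated. The hypothesis $G\Delta\equiv 0$ is what makes the time-$r<s$ versus $r\ge s$ distinction collapse cleanly, and getting that algebra exactly right — in particular the identity $G_rE_{\widehat c}=G_rE_\vac$ for all $c\in\mul$ — is the one step deserving genuine care; everything else is an application of the defining formula and a density argument.
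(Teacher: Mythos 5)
Your argument is correct and follows essentially the same route as the paper: the hypothesis $G \Delta \equiv 0$ turns the defining integrand into $E^{\widehat{f(r)}} G_r E_\vac$, which no longer depends on the test function, so the weak integral formula extends to the non-exponential vector $X_s v \evec{g}$, after which $E_\vac$ slides past the ampliation $\id_\mmul \otimes X_s$. The only point to tighten is your intermediate display: $X_s v \evec{g}$ equals $w \otimes \Vac_{[s}$ for a \emph{general} $w \in \ini \otimes \fock_{s)}$, not an exponential vector, so you should first record that
$\langle u \evec{f}, \int_s^t G_r \intd \Lambda_r \, \eta \rangle = \int_s^t \langle u \evec{f}, E^{\widehat{f(r)}} G_r E_\vac \, \eta \rangle \intd r$
holds for every $\eta \in \ini \otimes \fock$ by totality of exponential vectors (this is exactly how the paper phrases it, applying the formula to $(A \otimes \ket{\Vac_{[s}}\bra{\Vac_{[s}})\xi$ for arbitrary $\xi$).
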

\begin{proof}
Let $u$,~$v \in \ini$ and $f$,~$g \in \elltwo$; note that
\[
\langle u \evec{f}, %
\int_0^t G_r \intd \Lambda_r \, v \evec{g} \rangle = %
\int_0^t \langle u \evec{f}, %
E^{\widehat{f( r )}} G_r E_\vac v \evec{g} \rangle \intd r,
\]
since $\Delta^\perp E_{\widehat{c}} = E_\vac$ for all $c \in \mul$. If
$A \in \bop{\ini \otimes \fock_{s)}}{}$ and
$\xi \in \ini \otimes \fock$ then, setting
$P_{[s}:= \ket{\Vac_{[s}}\bra{\Vac_{[s}}$ for brevity, it follows that
\begin{align*}
\langle u \evec{f}, \int_s^t G_r \intd \Lambda_r %
( A \otimes P_{[s} ) \xi \rangle & = %
\int_s^t \langle u \evec{f}, E^{\widehat{f( r )}} G_r E_\vac %
( A \otimes P_{[s} ) \xi \rangle \intd r \\[1ex]
 & = \int_s^t \langle u \evec{f}, E^{\widehat{f( r )}} G_r %
( \id_\mmul \otimes A \otimes P_{[s} ) %
E_\vac \xi \rangle \intd r \\[1ex]
 & = \langle u \evec{f}, \int_s^t G_s %
( \id_\mmul \otimes A \otimes P_{[s} ) %
\intd \Lambda_r \xi \rangle. \qedhere
\end{align*}
\end{proof}

The following existence and uniqueness theorem is sufficiently general
for present purposes.

\begin{theorem}\label{thm:qsde}
Let $G$ and $X$ be as in Lemma~\ref{lem:inside}, with $X$ locally
bounded in norm. Then there is a unique vacuum-adapted process~$Z$ in
$\vNa$ such that
\begin{equation}\label{eqn:qsde}
Z_t = X_t + \int_0^t G_s ( \id_\mmul \otimes Z_s ) \intd \Lambda_s %
\qquad ( t \ge 0 ).
\end{equation}
Furthermore,
\[
\| Z \|_{\infty, t} \le \sqrt{2} \, \| X \|_{\infty, t} %
\exp( 2 \| l \|_{2, t}^2 + 2 \| k \|_{1, t}^2 ) \qquad (t \ge 0),
\]
where
$\left[\begin{smallmatrix} k & 0 \\[0.5ex]
 l & 0 \end{smallmatrix}\right]$
is the block-matrix form of $G$, and $Z$ is norm continuous if and
only if $X$ is.
\end{theorem}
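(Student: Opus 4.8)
The plan is to solve \eqref{eqn:qsde} by Picard iteration in the Banach space of vacuum-adapted bounded processes in $\vNa$, equipped with the seminorms $\| \, \cdot \, \|_{\infty, t}$. Define $Z^{(0)} := X$ and $Z^{(n + 1)}_t := X_t + \int_0^t G_s ( \id_\mmul \otimes Z^{(n)}_s ) \intd \Lambda_s$ for $n \ge 0$; note first that since $G \Delta \equiv 0$ and each $Z^{(n)}_s$ is vacuum adapted, the product $G_s ( \id_\mmul \otimes Z^{(n)}_s )$ is again an integrand process (its block-matrix form inherits $\left[\begin{smallmatrix} k & 0 \\[0.5ex] l & 0 \end{smallmatrix}\right]$ structure after composing with $\id_\mmul \otimes Z^{(n)}_s$), so the iteration is well defined and each $Z^{(n)}$ is vacuum adapted by the proposition on quantum stochastic integrals. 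The key quantitative input is an $L^2$-type estimate for the difference $D^{(n)} := Z^{(n + 1)} - Z^{(n)} = \int_0^\cdot G_s ( \id_\mmul \otimes D^{(n - 1)}_s ) \intd \Lambda_s$. Because $G$ has vanishing gauge and creation columns, the relevant first fundamental estimate reads
\[
\| \int_0^t G_s ( \id_\mmul \otimes Y_s ) \intd \Lambda_s \|_{\infty, t}^2
\le 2 \int_0^t ( \| l( s ) \|^2 + \| k( s ) \| ) \| Y_s \|_{\infty, s}^2 \intd s \cdot C_t
\]
for a locally bounded constant, and iterating this yields the factorially decaying bound $\| D^{(n)} \|_{\infty, t} \le \| X \|_{\infty, t} (2 \| l \|_{2, t}^2 + 2 \| k \|_{1, t}^2)^n / n!$ (up to the $\sqrt 2$), which sums to the claimed exponential estimate and proves norm convergence of $Z^{(n)}$, uniformly on compact time intervals, to a limit $Z$ satisfying \eqref{eqn:qsde}.

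For uniqueness, suppose $Z$ and $Z'$ both solve \eqref{eqn:qsde}; then their difference $W := Z - Z'$ is a bounded vacuum-adapted process with $W_t = \int_0^t G_s ( \id_\mmul \otimes W_s ) \intd \Lambda_s$. Applying the same fundamental estimate gives $\| W \|_{\infty, t}^2 \le 2 \int_0^t ( \| l( s ) \|^2 + \| k( s ) \| ) \| W \|_{\infty, s}^2 \intd s$ (the integrator measures being $\intd s$-absolutely continuous), and a Gronwall argument forces $W \equiv 0$ on every bounded interval, hence everywhere. The norm bound on $Z$ is then just the $n = 0$ telescoping of the series already established. Finally, for norm continuity: if $X$ is norm continuous, then each $Z^{(n)}$ is norm continuous — the map $t \mapsto \int_0^t G_s ( \id_\mmul \otimes Y_s ) \intd \Lambda_s$ is norm continuous for any bounded vacuum-adapted $Y$, since its increment over $[s, t]$ is bounded in norm by $\| G ( \id_\mmul \otimes Y ) \|_t - \| G ( \id_\mmul \otimes Y ) \|_s \to 0$ using absolute continuity of $\| l \|_{2, \cdot}^2$ and $\| k \|_{1, \cdot}$ — and norm continuity is preserved under the locally uniform limit. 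Conversely, if $Z$ is norm continuous then $X_t = Z_t - \int_0^t G_s ( \id_\mmul \otimes Z_s ) \intd \Lambda_s$ is a difference of two norm-continuous processes, hence norm continuous.

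The main obstacle I anticipate is purely bookkeeping: verifying carefully that $G_s ( \id_\mmul \otimes Z_s )$ genuinely is an integrand process with the asserted block structure — in particular that composing on the right with $\id_\mmul \otimes Z_s$ does not spoil measurability or the finiteness of $\| \, \cdot \, \|_t$, and that Lemma~\ref{lem:inside} (or rather the construction underlying it) legitimately allows the vacuum-adapted operator $Z_s$ to be moved inside the integral so that the Picard scheme closes. One must also be slightly attentive to the constant $\sqrt 2$, which comes from the $2$'s in the first fundamental estimate for vacuum-adapted integrands rather than the sharper $1$ available in the identity-adapted theory; tracking it through the factorial series and resumming is the only place where the stated bound, as opposed to a cruder exponential, actually gets used.
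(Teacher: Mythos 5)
Your proposal is correct and follows essentially the same route as the paper: Picard iteration (the paper sums the iterated integrals $X^{(n+1)}_t = \int_0^t G_s(\id_\mmul \otimes X^{(n)}_s)\intd\Lambda_s$, which are exactly your differences $D^{(n)}$), a factorially decaying bound obtained from the first fundamental estimate together with an integration by parts to put the $\|k\|_{1,t}$ contribution into Gronwall form, and uniqueness plus the norm bound via Gronwall's lemma. The only cosmetic difference is that you hide the integration-by-parts step in an unspecified constant $C_t$, whereas the paper makes it explicit via $c(s) = 4\|k_s\|\int_0^s\|k_r\|\intd r + 2\|l_s\|^2$.
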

\begin{proof}
Define a sequence of processes $( X^{(n)} )_{n\ge 0}$ inductively by
letting $X^{(0)} := X$ and
\[
X^{(n + 1)}_t := %
\int_0^t G_s ( \id_\mmul \otimes X^{(n)}_s ) \intd \Lambda_s %
\qquad ( t \ge 0 ).
\]
This process is well defined and such that
\[
\| X^{(n + 1)}_t \| \le %
\| k \, X^{(n)} \|_{1, t} + \| l \, X^{(n)} \|_{2, t} %
\qquad ( t \ge 0 ),
\]
so, integrating by parts,
\[
\| X^{(n + 1)} \|_{\infty, t}^2 \le %
2 \| k \, X^{(n)} \|_{1, t}^2 + 2 \| l \, X^{(n)} \|_{2, t}^2 \le %
\int_0^t c( s ) \| X^{(n)} \|_{\infty, s}^2 \intd s,
\]
where
\[
c( s ) := 4 \| k_s \| \int_0^s \| k_r \| \intd r + 2 \| l_s \|^2.
\]
It follows that
\[
\| X^{(n + 1)} \|^2_{\infty, t} \le \frac{1}{n!} %
\Bigl( \int_0^t c( s ) \intd s \Bigr)^n \| X \|_{\infty, t}^2 %
\qquad ( n \ge 0, \, t \ge 0 ),
\]
so $Z_t := \sum_{n = 0}^\infty X^{(n)}_t$ exists for all $t \ge 0$,
the series being convergent in norm. A dominated-convergence
argument shows that $Z$ satisfies~\eqref{eqn:qsde} and, since
\[
\| Z_t \|^2 \le %
2 \| X_t \|^2 + 2 \int_0^t c( s ) \| Z_s \|^2 \intd s %
\qquad ( t \ge 0 ),
\]
the inequality and so uniqueness follow from Gronwall's lemma. The
final claim is immediate.
\end{proof}

\section{Multipliers \via quantum stochastic differential equations}

Fix a vacuum-adapted quantum stochastic flow $j$ on $\vNa$ and let
\[
J_t := \widehat{\jmath}_t \circ \sigma_t : %
\vNa \uwkten \vNnoise \to %
\vNa \uwkten \vNnoise_{t)} \uwkten \vNnoise_{[t} = %
\vNa \uwkten \vNnoise
\]
and $\widetilde{J}_t := \idop_{\bop{\mmul}{}} \uwkten J_t$,
for all~$t \ge 0$. The ultraweakly continuous family of normal unital
$*$-homomorphisms $( J_t )_{t\ge 0}$ form a semigroup
(\cf~\cite[Proposition~4.3]{LiW00b}).

The following result is a vacuum-adapted version
of~\cite[Lemma~5.1]{BLS11} which suffices here.

\begin{lemma}\label{prp:shift}
If the integrand process $G$ is norm continuous then the family of
operators
$\bigl( %
\indf{[ s, \infty )}( r ) \widetilde{J}_s( G_{r - s} ) %
\bigr)_{r\ge 0}$, %
where $1_A$ denotes the indicator function of the set~$A$, defines an
integrand process such that
\[
J_s\Bigl( \int_0^t G_r \intd \Lambda_r \Bigr) = %
\int_s^{s + t} \widetilde{J}_s( G_{r - s} ) \intd \Lambda_r %
\qquad ( t \ge 0 ).
\]
\end{lemma}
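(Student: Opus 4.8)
The plan is to verify the identity by testing against exponential vectors, using the defining weak formula for the quantum stochastic integral from the Proposition above together with the explicit action of the isomorphisms $\sigma_s$ and the flow maps $J_s = \widehat{\jmath}_s \comp \sigma_s$ on exponential vectors. First I would check that the shifted family $\bigl( \indf{[s,\infty)}(r)\,\widetilde{J}_s(G_{r-s}) \bigr)_{r \ge 0}$ is an integrand process: vacuum-adaptedness follows because each $J_s$ maps $\vNa \uwkten \vNnoise$ into $\vNa \uwkten \vNnoise_{s)} \uwkten \vNnoise_{[s}$ and respects the vacuum-adapted structure, while the hypothesis $G\Delta \equiv 0$ is preserved since $\widetilde{J}_s$ acts as the identity on the $\bop{\mmul}{}$ leg; the finiteness $\|\indf{[s,\infty)}(\cdot)\widetilde{J}_s(G_{\cdot - s})\|_{s+t} < \infty$ comes from the contractivity of the $*$-homomorphism $J_s$ on each matrix entry together with norm continuity of $G$. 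Measurability is inherited from that of $G$ after the time shift.

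Next I would expand both sides weakly. For the right-hand side, the Proposition gives, for $u,v \in \ini$ and $f,g \in \elltwo$,
\[
\Bigl\langle u\evec{f}, \int_s^{s+t} \widetilde{J}_s(G_{r-s})\intd\Lambda_r\, v\evec{g} \Bigr\rangle
= \int_s^{s+t} \bigl\langle u\evec{f},\, E^{\widehat{f(r)}}\,\widetilde{J}_s(G_{r-s})\,E_{\widehat{g(r)}}\, v\evec{g}\bigr\rangle \intd r,
\]
and after substituting $r \mapsto r+s$ this becomes $\int_0^t \langle u\evec{f},\, E^{\widehat{f(r+s)}}\,\widetilde{J}_s(G_r)\,E_{\widehat{g(r+s)}}\,v\evec{g}\rangle \intd r$. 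For the left-hand side I would use the factorisation $\fock = \fock_{s)} \otimes \fock_{[s}$, write $\evec{f} = \evec{f|_{[0,s)}} \otimes \evec{f|_{[s,\infty)}}$, and use that $J_s = \widehat{\jmath}_s \comp \sigma_s$ with $\sigma_s$ implemented by the second quantised right shift $S_s$: concretely, $S_s^* \evec{f|_{[s,\infty)}} = \evec{s_s^{-1}(f|_{[s,\infty)})}$, which reindexes time on $[s,\infty)$ back to $[0,\infty)$ so that $f(r+s)$ on the right corresponds to the argument of the original integrand $G_r$ appearing inside $J_s\bigl(\int_0^t G_r\intd\Lambda_r\bigr)$. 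Pairing then against the weak formula for $\int_0^t G_r \intd\Lambda_r$ and matching the exponential-vector inner products on the two tensor legs should reduce the left side to exactly the same time integral.

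The main obstacle is bookkeeping the interaction between the conditional-expectation structure built into vacuum adaptedness (the $P_t$ factors, equivalently the $\ket{\Vac_{[\,\cdot}}\bra{\Vac_{[\,\cdot}}$ components) and the shift $\sigma_s$, and checking that the $\widehat{\jmath}_s = j_{s)} \uwkten \idop_{[s}$ part genuinely acts only on the initial-space-and-$\fock_{s)}$ tensor factor while $\idop_{[s}$ leaves the shifted-noise leg alone; one must be careful that the time parameter inside $G_{r-s}$ refers to the $[s,s+t)$ portion of the line whereas the integrator $\intd\Lambda_r$ also lives there, and that both are pulled back consistently by $S_s$. Once the weak identity is established for all $u,v \in \ini$, $f,g \in \elltwo$, the two bounded vacuum-adapted processes agree by the uniqueness clause of the Proposition (exponential vectors being total), which completes the proof. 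Norm continuity of $G$ is used only to guarantee that the shifted family is again a legitimate (norm-continuous) integrand process, so the hypothesis is exactly what is needed.
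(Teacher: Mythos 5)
Your overall strategy---verify the identity weakly against exponential vectors, using the first fundamental formula and the action of $S_s$ on exponential vectors---is in the same spirit as the paper's (whose sketch proof applies the ampliated vector functional $A \mapsto \langle \evec{f}, A\evec{g}\rangle$ and then passes to Riemann sums). Your verification that the shifted family is again an integrand process is also fine, modulo the remark that the hypothesis $G\Delta \equiv 0$ is not part of this lemma. However, the central step of your argument has a genuine gap. You propose to ``pair against the weak formula for $\int_0^t G_r \intd \Lambda_r$ and match the exponential-vector inner products on the two tensor legs''. This does not go through as stated, because $J_s = \widehat{\jmath}_s \comp \sigma_s$ does not act separately on the two tensor legs: while $\sigma_s$ is spatially implemented by $\id_\ini \otimes S_s$ and so converts matrix elements of $\sigma_s(T)$ into matrix elements of $T$ against shifted exponential vectors, the map $\widehat{\jmath}_s = j_{s)} \uwkten \idop_{[s}$ applies the flow $j_{s)}$ to the $\vNa$-leg, entangling it with $\fock_{s)}$. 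Consequently
\[
\langle u\evec{f},\, J_s(T)\, v\evec{g}\rangle
= \bigl( \phi_s \uwkten \omega_{S_s^*\evec{f|_{[s}},\, S_s^*\evec{g|_{[s}}} \bigr)(T),
\qquad
\phi_s := \omega_{u\evec{f|_{[0,s)}},\, v\evec{g|_{[0,s)}}} \comp j_{s)},
\]
and $\phi_s$ is a general normal functional on $\vNa$, \emph{not} a vector functional of the form $\omega_{u',v'}$ with $u', v' \in \ini$. The weak characterisation of the integral in the preceding Proposition is stated only for such vector functionals, so you cannot simply ``pair against'' it.

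This is exactly the point the paper's Riemann sums are there to handle: approximating $\int_0^t G_r \intd\Lambda_r$ ultraweakly by sums of elementary tensors lets one apply the normal map $J_s$ term by term and identify the limit as the shifted integral. An alternative repair along your lines is to extend the first fundamental formula from vector functionals $\omega_{u,v}$ on $\vNa$ to arbitrary normal functionals $\phi$ (tensored with exponential-vector functionals on $\fock$), by decomposing $\phi$ as an absolutely convergent sum of vector functionals and justifying the interchange of sum and time integral by dominated convergence using the $\|G\|_t$ bounds; with that extension in hand, your substitution $r \mapsto r+s$ and the identity $E^{\widehat{f(r+s)}}\,\widetilde{J}_s(G_r)\,E_{\widehat{g(r+s)}} = J_s\bigl(E^{\widehat{f(r+s)}} G_r E_{\widehat{g(r+s)}}\bigr)$ do complete the proof. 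Either way, the missing normality/approximation argument must be supplied; identifying the ``main obstacle'' as bookkeeping of the $P_t$ factors misses where the real work lies.
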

\begin{proof}[Sketch proof]
Apply the ampliation of the vector functional
$A \mapsto \langle \evec{f}, A \evec{g} \rangle$ to the left-hand
side, then consider suitable Riemann sums.
\end{proof}

With this technical lemma we can construct multipliers of $j$ by
solving quantum stochastic differential equations with coefficients
driven by $j$.

\begin{lemma}\label{lem:exist}
For all $c \in \ket{\mmul} \uwkten \vNa$ there is a unique process
$M^c = (M^c_t)_{t \ge 0}$ in $\vNa$ such that
$M^c - I = ( M^c_t - \id_{\ini \otimes \fock} )_{t \ge 0}$ is vacuum
adapted and
\[
 M^c_t = \id_{\ini \otimes \fock} + %
\int_0^t \widetilde{\jmath}_s( c E^\vac ) %
( \id_\mmul \otimes M^c_s ) \intd \Lambda_s,
\]
where $\widetilde{\jmath}_s := \idop_{\bop{\mmul}{}} \uwkten j_s$
and
$\vac := %
\left( \begin{smallmatrix} 1 \\[0.5ex] 0 \end{smallmatrix} \right) %
\in \mmul$, \ie
\[
\langle u \evec{f}, ( M^c - \id )_t v \evec{g} \rangle = %
\int_0^t \langle u \evec{f}, %
j_s( E^{\widehat{f( s )}} c) M^c_ s v \evec{g} \rangle \intd s %
\qquad ( t \ge 0 )
\]
for all $u$,~$v \in \ini$ and $f$,~$g \in \elltwo$. The process $M^c$
is norm continuous.
\end{lemma}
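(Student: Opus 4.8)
The plan is to realise this as an instance of Theorem~\ref{thm:qsde}, the integral equation for $M^c$ being rewritten as a fixed-point equation for the vacuum-adapted process $M^c - I$. First I would put
\[
G_s := \widetilde{\jmath}_s( c E^\vac ) \in \bop{\mmul}{} \uwkten \vNa \uwkten \vNnoise \qquad ( s \ge 0 )
\]
and check that $G$ is an integrand process with $G \Delta \equiv 0$. Write $c = \left[\begin{smallmatrix} c_0 \\ c_1 \end{smallmatrix}\right]$ with $c_0 \in \vNa$ and $c_1 \in \ket{\mul} \uwkten \vNa$. Since $\Delta E_\vac = 0$, the operator $c E^\vac \in \bop{\mmul}{} \uwkten \vNa$ has block-matrix form $\left[\begin{smallmatrix} c_0 & 0 \\ c_1 & 0 \end{smallmatrix}\right]$, so, applying $\widetilde{\jmath}_s$ entry-wise, $G_s$ has block-matrix form $\left[\begin{smallmatrix} j_s( c_0 ) & 0 \\ j_s( c_1 ) & 0 \end{smallmatrix}\right]$; in particular its second block-column vanishes, which is exactly $G_s \Delta = 0$. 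Measurability of $s \mapsto \langle \zeta', G_s \zeta \rangle$ comes from the ultraweak continuity of $j$, and $\| G_s \| \le \| c \|$ because $\widetilde{\jmath}_s$ is a normal $*$-homomorphism; hence $\| G \|_t \le t \| c_0 \| + \sqrt{t} \, \| c_1 \| < \infty$, so $G$ is an integrand process.

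Next I would set $X_t := \int_0^t G_s \intd \Lambda_s$. By the integral-existence proposition above this is a bounded vacuum-adapted process in $\vNa$, and the estimate $\| X_t - X_s \| = \| \int_s^t G_r \intd \Lambda_r \| \le ( t - s ) \| c_0 \| + \sqrt{ t - s } \, \| c_1 \|$ shows that $X$ is norm continuous, hence in particular locally bounded in norm. Theorem~\ref{thm:qsde} applied to this $G$ and $X$ then yields a unique vacuum-adapted process $Z$ in $\vNa$ with $Z_t = X_t + \int_0^t G_s ( \id_\mmul \otimes Z_s ) \intd \Lambda_s$ for all $t \ge 0$, and $Z$ is norm continuous because $X$ is. Setting $M^c_t := \id_{\ini \otimes \fock} + Z_t$ and using that $\id_\mmul \otimes \id_{\ini \otimes \fock}$ is the identity and that the quantum stochastic integral is linear in its integrand, one has $X_t = \int_0^t G_s ( \id_\mmul \otimes \id_{\ini \otimes \fock} ) \intd \Lambda_s$, so the equation for $Z$ rearranges into the asserted equation for $M^c$. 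Thus $M^c$ is a process in $\vNa$ with $M^c - I = Z$ vacuum adapted, it satisfies the displayed equation, and it is norm continuous; uniqueness of such an $M^c$ is precisely the uniqueness of $Z$ in Theorem~\ref{thm:qsde}.

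Finally, for the weak reformulation displayed in the statement I would unwind the defining identity of $\int \cdot \intd \Lambda$: for $u, v \in \ini$ and $f, g \in \elltwo$ the integrand there is $E^{\widehat{f( s )}} G_s ( \id_\mmul \otimes M^c_s ) E_{\widehat{g( s )}}$, and the block-matrix form of $c E^\vac$ gives $E^{\widehat{f( s )}} \widetilde{\jmath}_s( c E^\vac ) = j_s( E^{\widehat{f( s )}} c )\, E^\vac$, while $E^\vac ( \id_\mmul \otimes M^c_s ) = M^c_s E^\vac$ and $E^\vac E_{\widehat{g( s )}} = \langle \vac, \widehat{g( s )} \rangle \id_{\ini \otimes \fock} = \id_{\ini \otimes \fock}$; these collapse the integrand to $j_s( E^{\widehat{f( s )}} c )\, M^c_s$, which is the claimed formula. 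I anticipate no real obstacle here: the proof is essentially bookkeeping, and the only points needing care are the small algebra of $E^\vac$, $E_{\widehat{c}}$ and $\Delta$ in block form that produces $G \Delta \equiv 0$ and the weak reformulation, together with the observation that $X = \int G \intd \Lambda$ — and hence $M^c$ — inherits norm continuity from the bound in the integral-existence proposition.
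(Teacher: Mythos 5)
Your proposal is correct and follows essentially the same route as the paper: define $G_s := \widetilde{\jmath}_s( c E^\vac )$, observe $G \Delta \equiv 0$ (the paper uses $\widetilde{\jmath}( \, \cdot \, ) \Delta = \widetilde{\jmath}( \, \cdot \, \Delta )$ and $E^\vac \Delta = 0$, which is the same computation as your block-matrix check), and apply Theorem~\ref{thm:qsde} with $X = \int G \intd \Lambda$ to get existence, uniqueness and norm continuity of $M^c - \id$. Your extra verifications (the integrand-norm estimate and the unwinding of the weak form) are details the paper leaves implicit, and they are carried out correctly.
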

\begin{proof}
Define an integrand process $G$ by setting
$G_t := \widetilde{\jmath}_t( c E^\vac )$ for all $t \ge 0$. In view
of the identity
$\widetilde{\jmath}( \, \cdot \, ) \Delta = %
 \widetilde{\jmath}( \, \cdot \, \Delta )$,
which exploits the abuse of notation~\eqref{eqn:abuse}, and the fact
that $E^\vac \Delta = 0$, Theorem~\ref{thm:qsde} gives a
vacuum-adapted process $N$ in $\vNa$ which is norm continuous and such
that
\begin{equation}\label{eqn:hdef}
N_t = \int_0^t G_s \intd \Lambda_s + %
\int_0^t G_s ( \id_\mmul \otimes N_s ) \intd \Lambda_s %
\qquad ( t \ge 0 ).
\end{equation}
Hence $M^c_t := \id_{\ini \otimes \fock} + N_t$ is a norm-continuous
process as required; uniqueness holds because the solution
of~\eqref{eqn:hdef} is unique.
\end{proof}

\begin{theorem}\label{thm:mult}
For all $c \in \ket{\widehat{\mul}} \uwkten \vNa$ the process $M^c$
given by Lemma~\ref{lem:exist} is a multiplier for $j$.
\end{theorem}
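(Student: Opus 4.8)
The plan is to verify the four defining conditions (M~i)--(M~iv) of Definition~\ref{def:multiplier} for the process $M^c$ constructed in Lemma~\ref{lem:exist}. Conditions (M~i) and (M~iv) are immediate: $M^c_0 = \id_{\ini \otimes \fock}$ by construction, and norm continuity (a fortiori strong continuity) was already established in Lemma~\ref{lem:exist}. Condition (M~ii), the commutation relation $M^c_t P_t = P_t M^c_t$, follows from the fact that $M^c_t - \id$ is vacuum adapted --- that is, $(M^c_t - \id) = P_t (M^c_t - \id) P_t$ --- together with the observation that $P_t$ commutes with $\id_{\ini \otimes \fock}$; indeed vacuum adaptedness gives $(M^c_t - \id) P_t = P_t (M^c_t - \id)$ directly, hence the same for $M^c_t$. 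So the substance of the theorem is the cocycle identity (M~iii), namely $M^c_{s+t} = J_s(M^c_t) M^c_s$ for all $s, t \ge 0$, where $J_s = \widehat{\jmath}_s \comp \sigma_s$.

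To prove (M~iii) I would fix $s \ge 0$ and show that both sides, as functions of $t$, satisfy the same quantum stochastic differential equation with the same initial condition at $t = 0$, then invoke the uniqueness part of Theorem~\ref{thm:qsde}. At $t = 0$ both sides equal $M^c_s$ (using (M~i) and that $J_s$ is unital). For the left-hand side, $M^c_{s+t} = \id + N_{s+t}$ where $N$ solves \eqref{eqn:hdef}; splitting the integrals $\int_0^{s+t}$ as $\int_0^s + \int_s^{s+t}$ and using the cocycle property $j_{s+r} = \widehat{\jmath}_s \comp \sigma_s \comp j_r$ (condition (C~iii), which lifts to $\widetilde{\jmath}_{s+r} = \widetilde{J}_s \comp \widetilde{\jmath}_r$ after tensoring with $\idop_{\bop{\mmul}{}}$) one rewrites the increment over $[s, s+t]$ as a quantum stochastic integral whose integrand is $\widetilde{J}_s$ applied to the integrand $G_r = \widetilde{\jmath}_r(c E^\vac)$ driving $M^c_t$; here is where Lemma~\ref{prp:shift} is essential, to pull $J_s$ through the integral and reindex. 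For the right-hand side, one differentiates the product $J_s(M^c_t) M^c_s$: since $M^c_s$ is a fixed operator (independent of the integration variable $t$) and $J_s(M^c_t) - J_s(\id) = \widetilde{J}_s$ applied to the defining integral for $M^c_t - \id$, Lemma~\ref{lem:inside} lets one move the constant factor $M^c_s$ inside the integral, at which point the two integral representations coincide. Matching integrands then shows $t \mapsto J_s(M^c_t) M^c_s$ satisfies the same equation \eqref{eqn:qsde} (with $X_t \equiv M^c_s$ suitably ampliated, or rather the affine equation with inhomogeneous term $J_s$ applied to the driving integral) as $t \mapsto M^c_{s+t}$.

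The main obstacle is bookkeeping rather than conceptual: one must carefully track the several ampliations ($\widetilde{\jmath}$, $\widetilde{J}$, $\idop_\mmul \otimes (\, \cdot \,)$) and check that the hypotheses of Lemma~\ref{lem:inside} genuinely apply --- in particular that the integrand $G_r = \widetilde{\jmath}_r(c E^\vac)$ satisfies $G \Delta \equiv 0$, which holds because $E^\vac \Delta = 0$ and $\widetilde{\jmath}(\,\cdot\,)\Delta = \widetilde{\jmath}(\,\cdot\,\Delta)$ as noted in the proof of Lemma~\ref{lem:exist} --- and that $\widetilde{J}_s$ applied to a norm-continuous integrand process again yields a (norm-continuous) integrand process, which is exactly the content of Lemma~\ref{prp:shift}. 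Once the two sides are exhibited as solutions of the same equation of the type covered by Theorem~\ref{thm:qsde}, with the constant process $M^c_s$ playing the role of the locally bounded vacuum-adapted process $X$, uniqueness closes the argument. I would also remark that $J_s$ maps $\vNa \uwkten \vNnoise$ into itself and is a normal $*$-homomorphism, so $J_s(M^c_t) \in \vNa \uwkten \vNnoise$ and the product $J_s(M^c_t) M^c_s$ lies in the correct algebra, completing the verification that $M^c$ is a multiplier.
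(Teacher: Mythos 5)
Your proposal is correct and follows essentially the same route as the paper: the substance is (M~iii), established by combining the lifted cocycle identity $\widetilde{J}_s \comp \widetilde{\jmath}_{r-s} = \widetilde{\jmath}_r$ with Lemma~\ref{prp:shift} and Lemma~\ref{lem:inside} and then appealing to uniqueness. The only (cosmetic) difference is that the paper glues $M^c|_{[0,s)}$ and $t \mapsto J_s(M^c_{t-s})M^c_s$ into a single process satisfying the defining integral equation of $M^c$ on all of $\R_+$ and invokes the uniqueness assertion of Lemma~\ref{lem:exist} directly, rather than comparing two solutions of the time-shifted equation via Theorem~\ref{thm:qsde}.
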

\begin{proof}
It suffices to verify that condition (M~iii) of
Definition~\ref{def:multiplier} holds. Fix~$s \ge 0$ and let
\[
M_t := \left\{ \begin{array}{ll}
 M^c_t & \mbox{ if } t \in [ 0, s ), \\[1ex]
 J_s( M^c_{t - s} ) M^c_s & \mbox{ if } t \in [ s, \infty ).
\end{array}\right.
\]
Now
$\widetilde{J}_s \comp \widetilde{\jmath}_{r - s} = %
\widetilde{\jmath}_r$
for all $r \ge s$, by (C~iii) of Definition~\ref{def:cocycle}, so
Lemma~\ref{lem:inside} and Proposition~\ref{prp:shift} imply that
\begin{align*}
M_{s + t} & = %
 J_s\Bigl( \id_{\ini \otimes \fock} + %
 \int_0^t \widetilde{\jmath}_r( c E^\vac ) %
 ( \id_\mmul \otimes M^c_r ) \intd \Lambda_r \Bigr) M^c_s \\[1ex]
 & = M^c_s + \int_s^{s + t} \widetilde{J}_s\bigl( %
 \widetilde{\jmath}_{r - s}( c E^\vac ) %
 ( \id_\mmul \otimes M^c_{r - s} ) \bigr) %
 ( \id_\mmul \otimes M^c_s ) \intd \Lambda_r \\[1ex]
 & = M^c_s + \int_s^{s + t} \widetilde{\jmath}_r( c E^\vac ) %
 \bigl( \id_\mmul \otimes ( J_s( M^c_{r - s} ) M^c_s ) \bigr) %
 \intd \Lambda_r \\[1ex]
 & = \id_{\ini \otimes \fock} + %
 \int_0^s \widetilde{\jmath}_r( c E^\vac ) %
 ( \id_\mmul \otimes M^c_r ) \intd \Lambda_r + %
 \int_s^{s + t} \widetilde{\jmath}_r( c E^\vac ) %
 ( \id_\mmul \otimes M_r ) \intd \Lambda_r \\[1ex]
 & = \id_{\ini \otimes \fock} + %
 \int_0^{s + t} \widetilde{\jmath}_r( c E^\vac ) %
 ( \id_\mmul \otimes M_r ) \intd \Lambda_r \qquad ( t \ge 0 ).
\end{align*}
By Lemma~\ref{lem:exist}, $M \equiv M^c$ and
$M^c_{t + s} = M_{t + s} = J_s( M^c_t ) M^c_s$, as required.
\end{proof}

\section{Semigroup perturbation}

For vacuum-adapted integrands the quantum It\^o product formula takes
the following form \cite[Section~5.4]{Blt04}.

\begin{lemma}\label{lem:qif}
Let $Z := \int G \intd \Lambda$ and
$Z' := \int G' \intd \Lambda$ for integrand processes $G$
and~$G'$. Then
\[
H := ( \id_\mmul \otimes Z ) \Delta^\perp G' + %
G \Delta^\perp ( \id_\mmul \otimes Z' ) + G \Delta G'
\]
defines an integrand process such that
$Z Z' = \int H \intd \Lambda$.
\end{lemma}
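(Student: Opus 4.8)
The plan is to verify the defining integral identity for $\int H \intd\Lambda$ by testing against exponential vectors, exactly as in the definition of the quantum stochastic integral, and to control the $\| \cdot \|_t$ seminorm of $H$ so that it qualifies as an integrand process. First I would take $u$, $v \in \ini$ and $f$, $g \in \elltwo$ and compute $\langle u\evec{f}, Z_t Z'_t\, v\evec{g}\rangle$ by inserting a resolution in terms of the first-chaos/integral representations: both $Z$ and $Z'$ are vacuum-adapted, so $Z_t = Z_{t)}\otimes\ket{\Vac_{[t}}\bra{\Vac_{[t}}$ and similarly for $Z'$, and one can differentiate $t \mapsto \langle u\evec{f}, Z_t Z'_t v\evec{g}\rangle$. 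The derivative splits, by a Leibniz rule, into a term coming from $\rd Z_t$ acting on the left and a term from $\rd Z'_t$ on the right, plus the It\^o correction coming from the simultaneous jump of the gauge integrators; this is precisely the classical-looking computation underlying the quantum It\^o formula, here in the vacuum-adapted ($\Lambda$-only) setting, so only the number/gauge term survives and contributes the $G\Delta G'$ piece.

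More concretely, I would write the desired identity in integrated form: show that for all $t \ge 0$,
\[
\langle u\evec{f}, Z_t Z'_t\, v\evec{g}\rangle =
\int_0^t \langle u\evec{f}, E^{\widehat{f(s)}} H_s E_{\widehat{g(s)}}\, v\evec{g}\rangle \intd s,
\]
with $H$ as stated. The right-hand side, after substituting the three summands of $H$, becomes three integrals: the first, $\int_0^t \langle u\evec{f}, E^{\widehat{f(s)}} (\id_\mmul\otimes Z_s)\Delta^\perp G'_s E_{\widehat{g(s)}} v\evec{g}\rangle\intd s$; since $E^{\widehat{f(s)}}(\id_\mmul\otimes Z_s) = Z_s E^{\widehat{f(s)}}$ one can pull $Z_s$ out to the left and recognise $\Delta^\perp E_{\widehat{g(s)}} = E_\vac$ together with the defining relation for $\int G'\intd\Lambda = Z'$; integrating this against $Z_s$ on the left reproduces $\int_0^t \langle u\evec{f}, Z_s\, \rd Z'_s\, v\evec{g}\rangle$ in the obvious sense. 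Symmetrically the second summand gives $\int_0^t \langle \rd Z_s\, u\evec{f}, \dots\rangle$ contributions, and the third, $G\Delta G'$, supplies the It\^o correction $\int_0^t \langle u\evec{f}, E^{\widehat{f(s)}} G_s\Delta G'_s E_{\widehat{g(s)}} v\evec{g}\rangle\intd s$, in which $\Delta E_{\widehat{c}}$ picks out the multiplicity component $\ket{c}$, matching the $\overline{f(s)}g(s)$-weighted cross term one gets from differentiating a product of two stochastic integrals. Summing the three and invoking the (classical, since we are testing on a single pair of exponential vectors) product rule for absolutely continuous functions gives $\langle u\evec{f}, Z_t Z'_t v\evec{g}\rangle$.

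For the integrand bound I would use $\| Z_s \| \le \| G \|_s$ and $\| Z'_s \| \le \| G' \|_s$ from the preceding Proposition, together with $\|\Delta^\perp\| = \|\Delta\| = 1$, to estimate each of the four block-entries of $H$ in the appropriate $L^p([0,t))$ norm: the creation/number/annihilation/time entries of $(\id_\mmul\otimes Z)\Delta^\perp G'$ are dominated by $\| Z \|_{\infty,t}$ times the corresponding entries of $G'$, and similarly for the other two terms, so $\| H \|_t < \infty$ follows from $\| G \|_t, \| G' \|_t < \infty$ and local boundedness of $\| Z \|, \| Z'\|$; uniqueness of $\int H\intd\Lambda$ is already given by the Proposition, so once the integral identity is checked, $H$ is identified. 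The main obstacle is the bookkeeping in the differentiation step: one must justify that $t\mapsto \langle u\evec{f}, Z_t Z'_t v\evec{g}\rangle$ is absolutely continuous with the claimed derivative, which requires an approximation argument (e.g.\ by simple integrand processes, for which the identity is a finite computation, followed by passage to the limit using the integrand-norm estimates and dominated convergence) rather than a naive termwise differentiation; handling the vacuum-adaptedness carefully so that no $\rd A^\dagger\,\rd A$-type term appears, only the $\rd\Lambda\,\rd\Lambda$ correction encoded by $\Delta$, is where the ``slightly easier analysis but more complicated algebra'' of the vacuum-adapted formalism shows up.
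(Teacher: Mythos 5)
The paper does not prove this lemma at all: it is quoted verbatim from \cite[Section~5.4]{Blt04}, and your strategy --- verify the identity weakly on exponential vectors via the product rule for absolutely continuous functions plus the It\^o correction, first for simple integrands and then by approximation, and separately check that $H$ has finite $\|\cdot\|_t$-seminorm --- is exactly the standard argument carried out in that reference, so in substance you are reproving the cited result correctly. Three small imprecisions are worth fixing: in the first summand the relevant identity is $E^{\widehat{f(s)}}\Delta^\perp = E^\vac$ acting on the \emph{left} of $G'_s$ (after commuting $\id_\mmul\otimes Z_s$ past $E^{\widehat{f(s)}}$), not $\Delta^\perp E_{\widehat{g(s)}} = E_\vac$ on the right, and this is precisely what encodes the vacuum-adaptedness of $Z_s$ (its ``test function at time $s$'' is $0$); the term $G\Delta G'$ packages \emph{all} the nonzero It\^o products $\rd A\,\rd A^\dagger = \rd t$, $\rd A\,\rd\Lambda = \rd A$, $\rd\Lambda\,\rd A^\dagger = \rd A^\dagger$ and $\rd\Lambda\,\rd\Lambda = \rd\Lambda$, not only the gauge--gauge correction as your closing remark suggests; and the integrand bound for the block entries of $G\Delta G'$ requires Cauchy--Schwarz/H\"older (e.g.\ $\|m l'\|_{1,t}\le\|m\|_{2,t}\|l'\|_{2,t}$) rather than domination by a sup norm, which works only for the two $\Delta^\perp$ terms. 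None of these affects the validity of the approach.
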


The product of three integrals gives the following.

\begin{corollary}\label{cor:qif}
Let $G$, $G'$ and $G''$ be integrand processes and let
$Z := \int G \intd \Lambda$, $Z' := \int G' \intd \Lambda$
and $Z'' := \int G'' \intd \Lambda$. Then
\begin{multline*}
H := ( \id_\mmul \otimes Z Z' ) \Delta^\perp G'' + %
( \id_\mmul \otimes Z ) \Delta^\perp G' \Delta^\perp %
( \id_\mmul \otimes Z'' ) + %
 G \Delta^\perp ( \id_\mmul \otimes Z' Z'' ) \\
 + ( \id_\mmul \otimes Z ) \Delta^\perp G' \Delta G'' + %
 G \Delta G' \Delta^\perp ( \id_\mmul \otimes Z'' ) + %
 G \Delta G' \Delta G''
\end{multline*}
is an integrand process such that
$Z Z' Z'' = \int H \intd \Lambda$.
\end{corollary}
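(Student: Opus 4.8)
The plan is to obtain the three-fold product formula by iterating Lemma~\ref{lem:qif}, treating $Z Z' Z''$ as $(Z Z') Z''$. First I would apply Lemma~\ref{lem:qif} to the pair $Z$, $Z'$ to write $Z Z' = \int F \intd \Lambda$, where $F := ( \id_\mmul \otimes Z ) \Delta^\perp G' + G \Delta^\perp ( \id_\mmul \otimes Z' ) + G \Delta G'$ is an integrand process. Then I would apply Lemma~\ref{lem:qif} again to the pair $Z Z'$, $Z''$, obtaining $Z Z' Z'' = \int H \intd \Lambda$ with
\[
H = ( \id_\mmul \otimes Z Z' ) \Delta^\perp G'' + F \Delta^\perp ( \id_\mmul \otimes Z'' ) + F \Delta G''.
\]
Substituting the expression for $F$ into the last two terms and expanding gives exactly the six-term sum in the statement, modulo checking that $( \id_\mmul \otimes Z Z' )$ and the various compositions are the correct objects appearing there.

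The routine bookkeeping is to confirm that each of the six summands is of the right type (it is a product of a bounded vacuum-adapted ampliated process with the block-matrix entries of integrand processes, hence again an integrand process, the norm estimates combining via the triangle inequality and the estimates of the preceding Proposition), and that no term has been lost or doubled. The main point to watch is the placement of $\Delta$ versus $\Delta^\perp$ in the cross terms: when $F \Delta^\perp ( \id_\mmul \otimes Z'' )$ is expanded, the middle piece $G \Delta G'$ of $F$ contributes $G \Delta G' \Delta^\perp ( \id_\mmul \otimes Z'' )$, while $F \Delta G''$ contributes $( \id_\mmul \otimes Z ) \Delta^\perp G' \Delta G''$ and $G \Delta G' \Delta G''$; these match the four mixed and pure-gauge terms displayed, and the two $\Delta^\perp$-only terms come from $( \id_\mmul \otimes Z Z' ) \Delta^\perp G''$ together with $( \id_\mmul \otimes Z ) \Delta^\perp G' \Delta^\perp ( \id_\mmul \otimes Z'' )$ and $G \Delta^\perp ( \id_\mmul \otimes Z' ) \Delta^\perp ( \id_\mmul \otimes Z'' )$. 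The latter should be recognised as $G \Delta^\perp ( \id_\mmul \otimes Z' Z'' )$ after a further application of Lemma~\ref{lem:qif} to the pair $Z'$, $Z''$, and similarly $( \id_\mmul \otimes Z ) \Delta^\perp G' \Delta^\perp ( \id_\mmul \otimes Z'' )$ is already in the desired form.

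The only genuine obstacle is verifying that the associativity-type manipulation is legitimate at the level of integrand processes rather than merely for the resulting operators: one must check that $\Delta^\perp ( \id_\mmul \otimes Z' ) \Delta^\perp ( \id_\mmul \otimes Z'' ) = \Delta^\perp ( \id_\mmul \otimes Z' Z'' )$ as integrands, which follows since $\Delta^\perp$ is the ampliated vacuum projection $\ket{\vac}\bra{\vac} \otimes \id$ and $Z'$, $Z''$ are vacuum adapted, so sandwiching by $\Delta^\perp$ turns the ampliated product into the ampliation of the product. Granting this, the corollary follows by one more appeal to Lemma~\ref{lem:qif} and the linearity of the integrand-process structure, with the norm bound $\| \int H \intd \Lambda \| \le \| H \|_t$ inherited from the preceding Proposition.
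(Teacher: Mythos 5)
Your strategy---iterating Lemma~\ref{lem:qif} on $(ZZ')Z''$---is exactly the route the paper intends; the corollary is stated there without proof as an immediate consequence of the two-fold product formula, so you are reconstructing the same argument. The result is correct, but your bookkeeping drops one term. Expanding $H = (\id_\mmul\otimes ZZ')\Delta^\perp G'' + F\Delta^\perp(\id_\mmul\otimes Z'') + F\Delta G''$ with $F = (\id_\mmul\otimes Z)\Delta^\perp G' + G\Delta^\perp(\id_\mmul\otimes Z') + G\Delta G'$ produces \emph{seven} summands, not six: besides the terms you list, $F\Delta G''$ also contributes $G\Delta^\perp(\id_\mmul\otimes Z')\Delta G''$, which does not appear in the statement and which you never mention despite asserting that no term has been lost. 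The proof is rescued because this term vanishes: $\Delta$ and $\Delta^\perp$ act only on the $\mmul$ leg, hence commute with every ampliation $\id_\mmul\otimes X$, so $\Delta^\perp(\id_\mmul\otimes Z')\Delta = \Delta^\perp\Delta(\id_\mmul\otimes Z') = 0$. The same commutation (together with $(\Delta^\perp)^2 = \Delta^\perp$) is what gives $G\Delta^\perp(\id_\mmul\otimes Z')\Delta^\perp(\id_\mmul\otimes Z'') = G\Delta^\perp(\id_\mmul\otimes Z'Z'')$; this is a purely algebraic identity and needs neither a further application of Lemma~\ref{lem:qif} nor the vacuum adaptedness of $Z'$ and $Z''$, both of which you invoke---note that $\Delta^\perp = \ket{\vac}\bra{\vac}\otimes\id$ involves the vector $\vac\in\mmul$, not the Fock vacuum, so vacuum adaptedness is beside the point here. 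With the vanishing term accounted for and the justification corrected, your argument is complete.
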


We may now give the main result.

\begin{theorem}\label{thm:main}
Let $\psi : \vNa_0 \to \vNa \uwkten \bop{\mmul}{}$, where $\vNa_0$ is
a subset of $\vNa$, and suppose~$j$ satisfies the vacuum-adapted
quantum stochastic differential equation
\[
j_0( x ) = x \otimes \ket{\Vac}\bra{\Vac}, \qquad %
\rd j_t( x ) = \widetilde{\jmath}_t\bigl( \psi( x ) \bigr) %
\intd \Lambda_t \qquad ( x \in \vNa_0 ).
\]
For each $c$,~$d \in \ket{\mmul} \uwkten \vNa$, the generator
$\tau$ of the pointwise ultraweakly continuous semigroup
$\semigroup := \bigl( %
\expn[ ( M^c_t )^* \, j_t( \, \cdot \, ) \, M^d_t ] \bigr)_{t\ge 0}$
satisfies $\dom \tau \supset \vNa_0$ and, for all $x \in \vNa_0$,
\begin{equation}\label{eqn:gen}
\tau( x ) = E^\vac \psi( x ) E_\vac + %
c^* \Delta \psi( x ) E_\vac + E^\vac \psi( x ) \Delta d + %
c^* \Delta \psi( x ) \Delta d + c^* E_\vac x + x E^\vac d.
\end{equation}
\end{theorem}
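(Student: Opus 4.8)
The plan is to compute the ultraweak right-derivative of $t \mapsto \semigroup_t( x )$ at $t = 0$ for each $x \in \vNa_0$. Since $M^c$ and $M^d$ are multipliers for $j$ (Theorem~\ref{thm:mult}), Theorem~\ref{thm:semigroup} makes $\semigroup$ a pointwise ultraweakly continuous semigroup of normal completely bounded maps; more generally, for $c', d' \in \ket{\mmul} \uwkten \vNa$ I write $\semigroup^{c', d'}_t( a ) := \expn[ ( M^{c'}_t )^* j_t( a ) M^{d'}_t ]$ for the analogous semigroups (recall $M^0 = \id$), so that $\semigroup^{0, 0}$ is the vacuum-expectation semigroup $\semigroup^0$ of $j$ and $\semigroup = \semigroup^{c, d}$. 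Recall that the generator $\tau$ of $\semigroup$ contains $x$ in its domain, with value $\tau( x )$, as soon as the ultraweak limit of $t^{-1}( \semigroup_t( x ) - x )$ as $t \downarrow 0$ exists and equals $\tau( x )$; so it suffices to evaluate this limit.

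Fix $x \in \vNa_0$ and set $N^c := M^c - \id$, $N^d := M^d - \id$ (both vacuum adapted, vanishing at $t = 0$, by Lemma~\ref{lem:exist}), $j_0( x ) := x \otimes \ket{\Vac}\bra{\Vac}$ and $\widetilde Z_t := j_t( x ) - j_0( x ) = \int_0^t \widetilde{\jmath}_s( \psi( x ) ) \intd \Lambda_s$. Then
\begin{multline*}
( M^c_t )^* j_t( x ) M^d_t = j_0( x ) + \widetilde Z_t + ( N^c_t )^* j_0( x ) + j_0( x ) N^d_t \\
{} + ( N^c_t )^* \widetilde Z_t + \widetilde Z_t N^d_t + ( N^c_t )^* j_0( x ) N^d_t + ( N^c_t )^* \widetilde Z_t N^d_t .
\end{multline*}
Applying $\expn$, the first term gives $x$ (cancelling the $- x$). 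For the three terms still involving $j_0( x ) = ( x \otimes \id_\fock )( \id_\ini \otimes \ket{\Vac}\bra{\Vac} )$, the point is that $\id_\ini \otimes \ket{\Vac}\bra{\Vac}$ projects onto the Fock vacuum, whence $\expn[ ( N^c_t )^* j_0( x ) ] = \expn[ ( N^c_t )^* ]\, x$, $\expn[ j_0( x ) N^d_t ] = x\, \expn[ N^d_t ]$ and $\expn[ ( N^c_t )^* j_0( x ) N^d_t ] = \expn[ ( N^c_t )^* ]\, x\, \expn[ N^d_t ]$. By Lemma~\ref{lem:exist} and the vacuum-expectation formula for quantum stochastic integrals, $\expn[ N^d_t ] = \int_0^t \semigroup^{0, d}_s( E^\vac d ) \intd s$, so $\| \expn[ N^d_t ] \| = O( t )$ and $t^{-1} \expn[ N^d_t ] \to E^\vac d$ ultraweakly; similarly $t^{-1} \expn[ ( N^c_t )^* ] \to c^* E_\vac$. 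Hence these three terms contribute $x E^\vac d$, $c^* E_\vac x$ and $0$ (the last being $O( t^2 )$) -- two of the six summands of \eqref{eqn:gen}.

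The remaining four terms are products of the quantum stochastic integrals $\widetilde Z_t$, $N^c_t$, $N^d_t$; I would expand $( N^c_t )^* \widetilde Z_t$ and $\widetilde Z_t N^d_t$ by Lemma~\ref{lem:qif} and $( N^c_t )^* \widetilde Z_t N^d_t$ by Corollary~\ref{cor:qif}, writing each as $\int_0^t H_s \intd \Lambda_s$, so that $\expn[ \int_0^t H_s \intd \Lambda_s ] = \int_0^t \expn[ E^\vac H_s E_\vac ] \intd s$. Using that $\widetilde{\jmath}_s$ is a $*$-homomorphism commuting with the ampliations of the noiseless operators $E^\vac$, $E_\vac$, $\Delta$, $\Delta^\perp$, together with $E^\vac E_\vac = \id$, $\Delta^\perp E_\vac = E_\vac$ and the multiplicativity of $j_s$, each summand of $E^\vac H_s E_\vac$ simplifies. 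The summands carrying a factor $N^c_s$, $N^d_s$, $\widetilde Z_s$ or $\expn[ N^\bullet_s ]$ contribute nothing to $t^{-1} \int_0^t \expn[ E^\vac H_s E_\vac ] \intd s$ in the limit $t \to 0$, since (using norm continuity of $M^c$ and $M^d$ at $0$, pointwise ultraweak continuity of $j$, and $\| \expn[ N^\bullet_s ] \| = O( s )$) they are differences of semigroup orbits vanishing at $s = 0$; the only surviving summands are the It\^o-correction terms $G \Delta G'$ and $G \Delta G' \Delta G''$, which simplify all the way to $\semigroup^{0,0}_s( E^\vac \psi( x ) E_\vac )$ from $\widetilde Z_t$, to $\semigroup^{c,0}_s( c^* \Delta \psi( x ) E_\vac )$ from $( N^c_t )^* \widetilde Z_t$, to $\semigroup^{0,d}_s( E^\vac \psi( x ) \Delta d )$ from $\widetilde Z_t N^d_t$, and to $\semigroup^{c,d}_s( c^* \Delta \psi( x ) \Delta d )$ from $( N^c_t )^* \widetilde Z_t N^d_t$. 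Each of these is ultraweakly continuous in $s$, so the corresponding average $t^{-1} \int_0^t$ converges to the value of the argument at $s = 0$, contributing the remaining four summands $E^\vac \psi( x ) E_\vac$, $c^* \Delta \psi( x ) E_\vac$, $E^\vac \psi( x ) \Delta d$ and $c^* \Delta \psi( x ) \Delta d$ of \eqref{eqn:gen}.

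Adding the seven contributions shows that $t^{-1} \langle u, ( \semigroup_t( x ) - x ) v \rangle \to \langle u, \tau( x ) v \rangle$ for all $u, v \in \ini$, with $\tau( x )$ as in \eqref{eqn:gen}. Each of the seven summands is $O( t )$ in norm, so $t^{-1}( \semigroup_t( x ) - x )$ is norm-bounded for small $t$; since the weak operator and ultraweak topologies agree on norm-bounded subsets of $\vNa$, the ultraweak limit exists and equals $\tau( x )$, giving $x \in \dom \tau$. I expect the main obstacle to be the bookkeeping in the penultimate step -- organising the numerous terms produced by the two- and three-fold quantum It\^o formulae, confirming that exactly the It\^o-correction terms survive the $s \to 0$ limit, and matching them to the six summands of \eqref{eqn:gen} -- together with the somewhat delicate handling of $j_0( x )$, which is not itself a quantum stochastic integral.
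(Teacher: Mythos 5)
Your proposal is correct and rests on the same two pillars as the paper's proof: the identical decomposition of $(M^c_t)^*j_t(x)M^d_t$ into terms built from $(M^c-\id)_t$, $(j_t-j_0)(x)$ and $(M^d-\id)_t$, and the two- and three-fold quantum It\^o formulae (Lemma~\ref{lem:qif} and Corollary~\ref{cor:qif}). Where you diverge is in the endgame. The paper converts the $j_0(x)$-terms themselves into quantum stochastic (time) integrals, folds everything into a single integrand $\widetilde{\jmath}_s(A_1)+\widetilde{\jmath}_s(A_2)\widetilde{M}^d_s+(\widetilde{M}^c_s)^*\widetilde{\jmath}_s(A_3)+(\widetilde{M}^c_s)^*\widetilde{\jmath}_s(A_4)\widetilde{M}^d_s$, and observes that $E^\vac A_iE_\vac=0$ for $i=1,2,3$; this yields the \emph{exact} identity $\langle u,(\semigroup_t(x)-x)v\rangle=\int_0^t\langle u,\semigroup_s(y)v\rangle\intd s$ for all $t$, from which the generator statement is immediate with no asymptotics. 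You instead compute only the right derivative at $t=0$, discarding the non-It\^o-correction terms because after averaging they vanish in the limit (being differences of semigroup orbits agreeing at $s=0$, or carrying a factor of norm $o(1)$). Your limit arguments do go through --- the required ingredients (norm continuity of $M^c,M^d$ at $0$, the bound $\|j_s(x)\|\le\|x\|$, ultraweak continuity of the auxiliary semigroups $\semigroup^{0,0},\semigroup^{c,0},\semigroup^{0,d},\semigroup^{c,d}$ at $0$, and the coincidence of the weak-operator and ultraweak topologies on bounded sets) are all available --- but the price is precisely the bookkeeping you flag as the main obstacle, which the paper's exact integral equation avoids entirely, and the exact equation is also the more robust route to $x\in\dom\tau$ since it does not lean on the derivative-at-zero characterisation of the ultraweak generator. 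One phrasing slip: the contribution $E^\vac\psi(x)E_\vac$ from $\widetilde Z_t$ is not an It\^o correction --- $\widetilde Z_t$ is a single integral, and its vacuum expectation is directly $\int_0^t\semigroup^0_s(E^\vac\psi(x)E_\vac)\intd s$ --- though the value you assign it is right.
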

\begin{proof}
Let $x \in \vNa_0$ and $t \ge 0$; note that
$(M^c_t)^* j_t( x ) M^d_t - j_t( x )$ equals
\begin{multline}
( M^c - \id )^*_t ( j_t - j_0 )( x ) ( M^d - \id )_t + %
( M^c - \id )^*_t ( j_t - j_0 )( x ) \\
+ ( j_t - j_0 )( x ) ( M^d - \id )_t + %
( M^c - \id )^*_t j_0( x ) ( M^d - \id )_t \\
+ ( M^c - \id )^*_t j_0( x ) + j_0( x ) ( M^d - \id )_t.
\label{eqn:b}
\end{multline}
If $u$,~$v \in \ini$ and $f$,~$g \in \elltwo$ then, writing $P_\Vac$
for $\ket{\Vac}\bra{\Vac} \in \vNnoise$,
\begin{align*}
\langle u \evec{f}, j_0( x ) %
( M^d - \id )_t v \evec{g} \rangle & = %
\langle ( x^* u ) \Vac, \int_0^t \widetilde{\jmath}_s( d E^\vac ) %
( \id_\mmul \otimes M^d_s ) \intd \Lambda_s v \evec{g} \rangle \\[1ex]
& = \int_0^t \langle ( x^* u ) \Vac, E^\vac %
\widetilde{\jmath}_s( d E^\vac ) ( \id_\mmul \otimes M^d_s ) %
E_{\widehat{g( s )}} v \evec{g} \rangle \intd s \\[1ex]
 & = \smash[b]{\int_0^t \langle u \evec{f}, ( x \otimes P_\Vac ) %
j_s( E^\vac d ) M^d_s v \evec{g} \rangle \intd s,}
\end{align*}
therefore
\begin{align*}
j_0( x ) ( M^d - \id )_t & = \smash[t]{\int_0^t} %
( x \otimes P_\Vac ) j_s( E^\vac d ) M^d_s \intd s, \\[1ex]
( M^c - \id )^*_t j_0( x ) & = \smash[b]{\int_0^t} %
( M^c_s )^* j_s( c^* E_\vac ) ( x \otimes P_\Vac ) \intd s \\
\intertext{and}
( M^c - \id )^*_t j_0( x ) ( M^d - \id )_t & = %
\smash[t]{\int_0^t} ( M^c - \id )^*_s ( x \otimes P_\Vac ) %
j_s( E^\vac d ) M^d_s \intd s \\
& \qquad + \smash[b]{\int_0^t} ( M^c_s )^* j_s( c^* E_\vac ) %
( x \otimes P_\Vac ) ( M^d - \id )_s \intd s.
\end{align*}
This implies that the sum of the last three terms in~\eqref{eqn:b}
equals
\begin{multline*}
\int_0^t ( M^c_s )^* \bigl( ( x \otimes P_\Vac ) j_s( E^\vac d ) + %
j_s( c^* E_\vac ) ( x \otimes P_\Vac ) \bigr) M^d_s \intd s \\
= \int_0^t ( \widetilde{M}^c_s )^* \bigl( %
( \id_\mmul \otimes x \otimes P_\Vac ) %
\widetilde{\jmath}_s( \Delta^\perp d E^\vac ) + %
\widetilde{\jmath}_s( E_\vac c^* \Delta^\perp ) %
( \id_\mmul \otimes x \otimes P_\Vac ) \bigr) %
\widetilde{M}^d_s \intd \Lambda_s,
\end{multline*}
where
$\widetilde{M}^e_s := \id_\mmul \otimes M^e_s$ for $e = c$, $d$.

After some working, with the aid of Lemma~\ref{lem:qif} and
Corollary~\ref{cor:qif}, it follows that
$( M^c_t )^* j_t( x ) M^d_t - j_0( x )$ equals
\[
\int_0^t \bigl( \widetilde{\jmath}_s( A_1 ) + %
\widetilde{\jmath}_s( A_2 ) \widetilde{M}^d_s + %
(\widetilde{M}^c_s)^* \widetilde{\jmath}_s( A_3 ) + %
(\widetilde{M}^c_s)^* \widetilde{\jmath}_s( A_4 ) %
\widetilde{M}^d_s \bigr) \intd \Lambda_s,
\]
where
\begin{align*}
A_1 & := \Delta \psi( x ) \Delta, \\[1ex]
A_2 & := \Delta \psi( x ) \Delta^\perp + %
\Delta \psi( x ) \Delta d E^\vac, \\[1ex]
A_3 & := \Delta^\perp \psi( x ) \Delta + %
E_\vac c^* \Delta \psi( x ) \Delta \\[1ex]
\mbox{and} \quad A_4 & := \Delta^\perp \psi( x ) \Delta^\perp + %
E_\vac c^* \Delta \psi( x ) \Delta^\perp + %
\Delta^\perp \psi( x ) \Delta d E^\vac \\
 & \qquad + E_\vac c^* \Delta \psi( x ) \Delta d E^\vac + %
E_\vac c^* \Delta^\perp ( \id_\mmul \otimes x ) + %
( \id_\mmul \otimes x ) \Delta^\perp d E^\vac.
\end{align*}
Hence
\begin{align*}
\langle u, ( \semigroup_t( x ) - x ) v \rangle & = %
\langle u \Vac, %
\bigl( ( M^c_t )^* j_t( x ) M^d_t - j_0( x ) \bigr) v %
\Vac \rangle \\[1ex]
 & = \int_0^t \langle u \Vac, \bigl( j_s( E^\vac A_1 E_\vac ) + %
j_s( E^\vac A_2 E_\vac ) M^d_s \\
 & \hspace{4em} + (M^c_s)^* j_s( E^\vac A_3 E_\vac ) + %
( M^c_s )^* j_s( E^\vac A_4 E_\vac ) M^d_s \bigr) %
v \Vac \rangle \intd s \\[1ex]
 & = \int_0^t \langle u \Vac, (M^c_s)^* %
j_s( E^\vac A_4 E_\vac ) M^d_s v \Vac \rangle \intd s \\[1ex]
 & = \int_0^t \langle u, \semigroup_s( y ) v \rangle \intd s,
\end{align*}
where
\[
y = E^\vac \psi( x ) E_\vac + c^* \Delta \psi( x ) E_\vac + %
E^\vac \psi( x ) \Delta d + c^* \Delta \psi( x ) \Delta d + %
c^* E_\vac x + x E^\vac d,
\]
as required.
\end{proof}

\begin{remark}
In terms of the direct-sum decomposition $\mmul = \C \oplus \mul$, if
\[
\psi = \begin{bmatrix} \tau_0 & \delta_0^\dagger \\[1ex]
 \delta_0 & \pi_0 \end{bmatrix}, \qquad %
c = \begin{bmatrix} k_1 \\ l_1 \end{bmatrix} %
\quad \mbox{and} \quad %
d = \begin{bmatrix} k_2 \\ l_2 \end{bmatrix}
\]
then \eqref{eqn:gen} becomes
\[
\tau( x ) = \tau_0( x ) + l^*_1 \delta_0( x ) + %
\delta_0^\dagger( x ) l_2 + l_1^* \pi_0( x ) l_2 + k_1^* x + x k_2 %
\qquad ( x \in \vNa_0 ).
\]
When $\psi$ is bounded and $\vNa_0 = \vNa$, the map $\delta_0$ is a
bounded $\pi_0$-derivation. Since
$\delta_0( \vNa_0 ) \subset \vNa \uwkten \ket{\mul}$, it follows that
$\delta_0$ is implemented (\cite{ChE79}, see \cite[Chapter~6]{Lin05})
and so
\begin{multline*}
\tau( x ) = \I [ h, x ] - \hlf \{ r^* r, x \} + r^* \pi_0( x ) r \\
+ ( x r^* - r^* \pi_0( x ) ) l_2 + %
l_1^* ( r x - \pi_0( x ) r ) + l_1^* \pi_0( x ) l_2 + %
k_1^* x + x k_2
\end{multline*}
for some $h = h^* \in \vNa$ and
$r \in \ket{\mul} \uwkten \vNa$. Equivalently,
\[
\tau( x ) = d_1^* \pi_0( x ) d_2 + e_1^* x + x e_2,
\]
where $d_i = l_i - r$ and $e_i = k_i + r^* l_i - \hlf r^*r - \I h$
for $i = 1$, $2$.
\end{remark}

\par\bigskip\noindent
{\bf Acknowledgments.} We thank Kalyan Sinha for constructive
remarks at an early stage of this project. This work benefited from
the UK-India Education and Research Initiative grant QP-NCG-QI:
\emph{Quantum Probability, Noncommutative Geometry and Quantum
Information}.

\bibliographystyle{amsplain}

\end{document}